\makeatletter \@addtoreset{equation}{section} \makeatother
\newtheorem{theorem}{Theorem}[section]
\newtheorem{definition}{Definition}[section]
\newtheorem{lemma}{Lemma}[section]
\newtheorem{remark}{Remark}[section]
\begin{document}		

	\title{\bf \sc Mixed order elliptic problems driven by a singularity, a Choquard type term and a discontinuous power nonlinearity with critical variable exponents}
	\author{\bf Jiabin Zuo$^{a}$ \&  Debajyoti Choudhuri$^b$ \& Du\v{s}an D.  Repov\v{s}$^{c,d,e}$ \footnote{Corresponding
			author: dusan.repovs@guest.arnes.si}\\
		\small{$^a$School of Mathematics and Information
			Science, Guangzhou University, Guangzhou, 510006, China.}\\
		\small{$^b$Department of Mathematics, National Institute of Technology Rourkela, Rourkela - 769008, Odisha, India.}\\
		\small{$^c$Faculty of Education, University of Ljubljana, Kardeljeva pl. 16, SI-1000 Ljubljana, Slovenia.}\\
		\small{$^d$Faculty of Mathematics and Physics, University of Ljubljana, Jadranska 21, SI-1000 Ljubljana, Slovenia.}\\
		\small{$^e$Institute of Mathematics, Physics and Mechanics,  Jadranska 19, SI-1000 Ljubljana, Slovenia.}\\
		\small{\it Email: zuojiabin88@163.com \& dc.iit12@gmail.com \& dusan.repovs@guest.arnes.si}}
	\date{}
	\maketitle
	\begin{abstract}
We prove the existence of solutions for the following critical Choquard type problem with a variable-order fractional Laplacian and a variable singular exponent
\begin{align*}
	\begin{split}
		a(-\Delta)^{s(\cdot)}u+b(-\Delta)u&=\lambda |u|^{-\gamma(x)-1}u+\left(\int_{\Omega}\frac{F(y,u(y))}{|x-y|^{\mu(x,y)}}dy\right)f(x,u)\\
		&
		+\eta H(u-\alpha)|u|^{r(x)-2}u,~\text{in}~\Omega,\\
		u&=0,~\text{in}~\mathbb{R}^N\setminus\Omega.
	\end{split}
\end{align*}
where $a(-\Delta)^{s(\cdot)}+b(-\Delta)$ is a mixed operator with variable order $s(\cdot):\mathbb{R}^{2N}\rightarrow (0,1)$, $a, b\geq 0$ with $a+b>0$,  $H$ is the Heaviside function  (i.e., $H(t)=0$ if $t\leq0$, $H(t) = 1$ if $t>0),$ $\Omega\subset\mathbb{R}^N$ is a bounded domain, $N\geq 2$, $\lambda>0$, $0<\gamma^{-}=\underset{x\in\bar{\Omega}}{\inf}\{\gamma(x)\}\leq\gamma(x)\leq\gamma^+=\underset{x\in\bar{\Omega}}{\sup}\{\gamma(x)\}<1$, $\mu$ is a continuous variable parameter, and $F$ is the primitive function of a suitable $f$. The variable exponent $r(x)$ can be equal to the critical exponent $2_{s}^*(x)=\frac{2N}{N-2\bar{s}(x)}$ with $\bar{s}(x)=s(x,x)$ for some $x\in\bar{\Omega},$ and $\eta$ is a positive parameter. We also show that as $\alpha\rightarrow 0^+$, the corresponding solution converges to a solution for the above problem with $\alpha=0$.

	\begin{flushleft}
	{\bf Keywords}:~ Choquard type, variable-order fractional operator, mixed operator, variable singular exponent, discontinuous power nonlinearity.
 {\bf Math. Subj. Classif.}:~35R11, 35J75, 35J60, 46E35.
\end{flushleft}
\end{abstract}

	\section{Introduction}\label{Introduction}	

In this paper, we shall consider the following  mixed order elliptic problem
\begin{align}
	\begin{split}\label{main prob}
		a(-\Delta)^{s(\cdot)}u+b(-\Delta)u&=\lambda |u|^{-\gamma(x)-1}u+\left(\int_{\Omega}\frac{F(y,u(y))}{|x-y|^{\mu(x,y)}}dy\right)f(x,u)\\
		&
		+\eta H(u-\alpha)|u|^{r(x)-2}u,~\text{in}~\Omega,\\
		u&=0,~\text{in}~\mathbb{R}^N\setminus\Omega.
	\end{split}
\end{align}
Here, $(-\Delta)^{s(\cdot)}$ is the fractional Laplacian of a variable order $s(\cdot)$ which is defined as follows.
\begin{eqnarray}
	(-\Delta)^{s(\cdot)}u(x)&=&C_{N,s(\cdot)}\lim_{\epsilon\rightarrow 0}\int_{\mathbb{R}^N\setminus B_{\epsilon}(x)}\frac{(u(x)-u(y))}{|x-y|^{N+2s(x,y)}}dy, x\in\mathbb{R}^N\nonumber
\end{eqnarray}
and $-\Delta=-\sum_{i=1}^{N}\frac{\partial^2}{\partial x_i^2}$ is the well-known Laplace operator. 

We suppose that $s(\cdot):\mathbb{R}^{2N}\rightarrow(0,1)$ is a uniformly continuous function and $\mu(\cdot):\mathbb{R}^{2N}\rightarrow(0,N)$ is a continuous function that satisfy the following
hypotheses
\begin{eqnarray}\label{H_cond}
	(H_1):~& &0<s^-\leq s^+<1, 0<\mu^-<\mu^+<N~\text{and}~2s^+<N\nonumber;\\
	(H_2):~& &s(\cdot), \mu(\cdot)~\text{are symmetric, i.e.}\nonumber\\
~& &s(x,y)=s(y,x), \mu(x,y)=\mu(y,x), ~(x,y)\in\mathbb{R}^{2N}.\nonumber
\end{eqnarray}

We assume that $f:\bar{\Omega}\times\mathbb{R}\rightarrow\mathbb{R}^+\cup\{0\}$ is a continuous  odd function with respect to the second variable that satisfies the following hypotheses
\begin{eqnarray}\label{F_cond}
	(F_1):~& &\text{there exist}~c_1>0~\text{and}~\tau\in C_+(\bar{\Omega})\cap\mathcal{M}~\text{with}~2<\tau^-<{2_s^*}^-\nonumber\\
~& &~\text{and}~2<\tau^+<{2_s^*}^+~\text{such that}~0\leq f(x,t)\leq c_1|t|^{\tau(x)-1},~\nonumber\\ ~& &\text{for any}~(x,t)\in\Omega\times\mathbb{R},\text{where}~\nonumber\\ ~& &~\mathcal{M}=\{\tau\in C_+(\bar{\Omega}):2\leq\tau(x)\beta^-\leq\tau(x)\beta^+<{2}_s^*(x)~\text{for all}~x\in\mathbb{R}^N\}\nonumber\\ ~& &~\text{with}~\beta\in C_+(\mathbb{R}^{2N})~\text{such that}~\frac{2}{\beta(x,y)}+\frac{\mu(x,y)}{N}=2~\nonumber\\ ~& &~\text{for any}~(x,y)\in\mathbb{R}^{2N},\text{and}\nonumber\\ ~& &~~1<{2_s^*}^{-}=\underset{x\in\bar{\Omega}}{\inf}\{{2_s^*}(x)\}\leq {2_s^*}(x)\leq {2_s^*}^+=\underset{x\in\bar{\Omega}}{\sup}\{{2_s^*}(x)\};\nonumber\\
	(F_2):~& &~\text{there exists}~\theta>2~\text{such that}~\nonumber\\
	& &0<\theta F(x,t)\leq 2tf(x,t),~\text{for any}~t\in\mathbb{R}\setminus\{0\}~\text{and for any}~x\in\Omega\nonumber.
\end{eqnarray}
\begin{remark}\label{rema1}
	We shall  sometimes denote the {\it Choquard} type of term $$\int_{\Omega}\frac{F(y,u(y))}{|x-y|^{\mu(x,y)}}dy$$ by $F*\frac{1}{|x|^{\mu(\cdot)}}$.
\end{remark}
 In recent years, the problem of combining local and nonlocal Laplacian operators has attracted a huge amount of attention due to theoretical and practical applications, such as in plasma physics (see \cite{d12}) and population dynamics (see \cite{d23}). Another key reason as to why the study of such equations holds importance is due to the fact that many of the free boundary problems studied in mathematical physics are formulated in this form. Some of them are the surface problem, the obstacle problem, and the Elenbaas equation (see 
 \cite{16,17,Chang 3}). Interested readers may also refer to \cite{5}.

Biagi et al. \cite{2022add} considered the corresponding Sobolev inequality, detecting
the optimal constant for a critical local and nonlocal problem, furthermore, they proved an existence (and nonexistence) result of the corresponding subcritical perturbation problem. Hu and Yang \cite{bh} investigated a mixed order problem and proved a bifurcation and multiplicity result. Arora and R\u{a}dulescu \cite{arora1} studied an elliptic problem involving both local-nonlocal operators and proved the existence (or nonexistence) of weak solution to it. Cassani et al. \cite{DC} analyzed spectral properties, established the validity of the maximum principle, showed related properties of weak solutions of mixed local-nonlocal elliptic equations. In particular, this makes the problem more interesting when the order of nonlocal operator can be variable, and to the best of our knowledge the first fractional variable-order problem can be traced back to the work of Lorenzo and Hartley \cite{vo}. Since the fractional problem of variable order can better describe the temperature change of the object in the process of nonlinear diffusion, we shall  focus on a class of problems with the fractional variable-order Laplace operator (see \cite{lz,mdby,nuovo}).

The starting point for the study of singular problems was the pioneering work of Crandall \cite{MGC}. We also refer to the monograph by Ghergu and R\u{a}dulescu \cite{MGV}. At a time when scholars were concerned about nonlinearities with constant exponent, few works also involved variable singular exponent case. For instance, Garain et al. \cite{GSC} proved the existence of one solution and two solutions for  a quasilinear variable singular exponent fractional elliptic problems when the parameters are in a suitable range. Ghosh et al. \cite{SGD} proved the existence of infinite number of small energy solutions by using the symmetric mountain pass lemma for the elliptic PDEs of variable exponent with a singular nonlinearity. Interested readers can also refer to Chammem et al. \cite{crg}.

 The Choquard type problem is a hot topic which many mathematical scholars have begun paying close attention to recently. It becomes more interesting when combined with the variable exponents. Alves et al. \cite{alv1} established   a kind of Hardy-Littlewood-Sobolev inequality with variable exponents, which usually is used to estimate the convolution term in a problem. With the help of Nehari manifold methods, Li and Tang \cite{gdl} obtained the existence of positive ground state solution for a class of stationary Choquard equations with variable exponent growth. By using variational methods, Zuo et al. \cite{ZJF} and Biswas et al. \cite{RBS} proved the existence and multiplicity of solutions for a ($p(.)\&q(.)$) Choquard problem with nonstandard growth.

 In practical problems, some functional models are not always continuous, and it is necessary to consider non-smooth functions on a bounded domain, Figueiredo et al. \cite{gmfrg} studied a critical $p\&q$ elliptic problem with  discontinuous nonlinearities and obtained  the existence of positive solutions. Yuan-Yu \cite{ZYJX}  also considered a class of discontinuous problems to which they obtained the existence of definite sign and sign-changing solutions. Moreover, Xiang-Zhang \cite{MXBZ} dealt with a class of critical $p$-Kirchhoff type problems with a discontinuous nonlinearity in the whole space $\mathbb{R}^{N}$ and proved the existence and multiplicity of solutions. Albuquerque et al. \cite{jcda} discussed a a coupled elliptic system with a discontinuous nonlinearity and showed the existence and behavior of positive solutions.  For more information on non-smooth analysis, one can also see Alves-Yuan \cite{COA}, dos Santos-Tavares \cite{tava1}, dos Santos-Figueiredo \cite{ggdos}, Saoudi et al. \cite{KAD}.

 Inspired by all of the above, we are going to consider a critical problem involving  the mixed operator,  variable singularity exponents, the convolution term and the discontinuity term simultaneously. To this end, we need to overcome the difficulty posed by each part.
Here are the main results of this article.

\begin{theorem}\label{mainres1}
	Let $(H_1)-(H_2)$ and $(F_1)-(F_2)$ hold for  functions $s$, $\mu$ and $f.$ Then there exist $\alpha', \lambda'>0$ such that for every $\alpha\in (0,\alpha')$, $\lambda\in(0,\lambda')$, problem \eqref{main prob} admits at least one nontrivial weak solution, say $u_{\alpha}$. Furthermore, $|\{x\in\Omega:u_{\alpha}(x)>\alpha\}|>0$, where $|\cdot|,$ when considered for a set, denotes the Lebesgue measure.
\end{theorem}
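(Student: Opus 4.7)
The plan is to realize $u_\alpha$ as a local minimizer of a non-smooth energy functional, obtained on a small ball via Ekeland's variational principle and Clarke's subdifferential calculus. Set $X:=W_0^{s(\cdot),2}(\Omega)\cap H_0^1(\Omega)$ with norm
\[
\|u\|_X^2 := a[u]_{s(\cdot)}^2+b\|\nabla u\|_{L^2(\Omega)}^2,
\]
and note that $(H_1)$ together with the classical Sobolev embedding for $H_0^1$ yields $X\hookrightarrow L^{2_s^*(\cdot)}(\Omega)$ continuously and $X\hookrightarrow\hookrightarrow L^{q(\cdot)}(\Omega)$ compactly for any variable exponent $q$ with $q^+<{2_s^*}^-$. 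Then define the energy functional
\[
J_\alpha(u) := \tfrac12\|u\|_X^2 - \lambda\int_\Omega\frac{|u|^{1-\gamma(x)}}{1-\gamma(x)}\,dx - \tfrac12\iint_{\Omega\times\Omega}\frac{F(x,u(x))F(y,u(y))}{|x-y|^{\mu(x,y)}}\,dx\,dy - \eta\int_\Omega G_\alpha(x,u)\,dx,
\]
with $G_\alpha(x,t):=\int_0^t H(\sigma-\alpha)|\sigma|^{r(x)-2}\sigma\,d\sigma$. The primitive $G_\alpha(x,\cdot)$ is continuous (the Heaviside discontinuity sits only at the level of its derivative), so $J_\alpha\in C(X,\mathbb{R})$; however $J_\alpha$ is not Fr\'echet differentiable and critical points will be understood in the Clarke sense.

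Using $(F_1)$ and the variable-exponent Hardy--Littlewood--Sobolev inequality of \cite{alv1}, I would obtain, for $\|u\|_X\le 1$, an estimate of the form
\[
J_\alpha(u)\ge \tfrac12\|u\|_X^2 - C_1\lambda\|u\|_X^{1-\gamma^-} - C_2\|u\|_X^{2\tau^-} - C_3\|u\|_X^{r^-}.
\]
Because $1-\gamma^-\in(0,1)$ and $2\tau^-,r^->2$, a direct one-variable analysis yields $\rho>0$ and $\lambda'>0$ so that for every $\lambda\in(0,\lambda')$ the right-hand side is uniformly positive on $\partial B_\rho$, while for any fixed nontrivial $\varphi\in X$ the singular term dominates for small $t>0$, so $J_\alpha(t\varphi)<0$. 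Hence $c_\alpha:=\inf_{\overline{B_\rho}}J_\alpha<0<\inf_{\partial B_\rho}J_\alpha$, and Ekeland's variational principle applied to $J_\alpha$ on the complete metric set $\overline{B_\rho}$ supplies a minimizing sequence $\{u_n\}\subset B_\rho$ with $J_\alpha(u_n)\to c_\alpha$ and $\mathrm{dist}_{X^*}(0,\partial J_\alpha(u_n))\to 0$ in the Clarke subdifferential sense.

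The main obstacle is the passage to the limit in the critical discontinuous term. Along a subsequence, $u_n\rightharpoonup u_\alpha$ in $X$, $u_n\to u_\alpha$ in $L^{q(\cdot)}(\Omega)$ for every subcritical $q$, and a.e.\ in $\Omega$. The variable-exponent HLS bound together with a.e.\ convergence handles the Choquard piece, while $1-\gamma^+<1$ and dominated convergence handle the singular term. For the critical term, where $r^+$ may equal ${2_s^*}^+$, variable-exponent concentration-compactness may spawn a defect mass at concentration points; I would rule this out by shrinking $\lambda'$ further so that $|c_\alpha|$ stays strictly below the first critical level associated to the best Sobolev constant of $2_s^*(\cdot)$, which forces strong $L^{r(\cdot)}$-convergence. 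Combined with weak lower semicontinuity of $\|\cdot\|_X^2$, this gives $J_\alpha(u_\alpha)=c_\alpha<0$, so $u_\alpha\not\equiv 0$. The Clarke critical-point property together with $|\{u_\alpha=\alpha\}|=0$ (valid generically for $\alpha$ outside an at most countable exceptional set of level values of $u_\alpha$, or alternatively by a Stampacchia-type argument on $\{u_\alpha=\alpha\}$) then converts this into a genuine weak solution of \eqref{main prob}.

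For the final assertion $|\{u_\alpha>\alpha\}|>0$, I would argue by contradiction. The test $t\varphi$ used in the geometry step gives a uniform upper bound $c_\alpha\le -c_0\lambda^{2/(1+\gamma^-)}$ independent of $\alpha$. On the other hand, if $u_\alpha\le\alpha$ a.e., then $u_\alpha$ weakly solves the auxiliary problem obtained by dropping the Heaviside term, and an elliptic a priori estimate together with $(F_1)$ gives
\[
J_\alpha(u_\alpha)\ge -C\lambda|\Omega|\alpha^{1-\gamma^-}-C\alpha^{2\tau^-},
\]
which tends to $0$ as $\alpha\to 0^+$; choosing $\alpha'$ small enough in terms of $\lambda'$ makes these two estimates incompatible, contradicting $J_\alpha(u_\alpha)=c_\alpha$ and establishing the claim.
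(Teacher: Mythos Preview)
Your approach differs substantially from the paper's: you seek a \emph{local minimizer} of negative energy via Ekeland's principle on a small ball, whereas the paper obtains a \emph{mountain-pass critical point} of positive energy for a truncated functional $\bar I$ (Lemma~\ref{hypo_of_MP} and Theorem~\ref{PS_THM}). Both geometries are in principle legitimate for singular-plus-critical problems, and your contradiction argument for $|\{u_\alpha>\alpha\}|>0$ mirrors the paper's (they use the lower bound $c_\alpha\ge r_2>0$ where you use the upper bound $c_\alpha\le -c_0<0$). However, there is a genuine gap in your argument.

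The functional $J_\alpha$ is \emph{not locally Lipschitz} on $X$, so Clarke's subdifferential calculus does not apply to it as written. The map $u\mapsto\int_\Omega |u|^{1-\gamma(x)}\,dx$ is only H\"older continuous of order $1-\gamma^+<1$ near any function vanishing on a set of positive measure (the elementary inequality $|a^{1-\gamma}-b^{1-\gamma}|\le |a-b|^{1-\gamma}$ is sharp at $b=0$), and its formal G\^ateaux derivative $\int_\Omega |u|^{-\gamma(x)-1}u\,\varphi\,dx$ can be $+\infty$ when $\varphi>0$ on $\{u=0\}$. Consequently, after Ekeland produces a minimizing sequence you cannot assert $\mathrm{dist}_{X^*}(0,\partial J_\alpha(u_n))\to 0$, nor can you pass to the limit in the singular term of the Euler--Lagrange equation by dominated convergence without first knowing that $u_\alpha$ is bounded below by a positive function. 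This is precisely why the paper replaces the singular nonlinearity by the cut-off $\phi(x,t)$, which agrees with $|t|^{-\gamma(x)-1}t$ for $|t|>\underline u_\lambda(x)$ (the solution of the purely singular problem, Theorem~\ref{imp_thm}) and is bounded otherwise; the resulting $\bar I$ is locally Lipschitz, with the Heaviside term as its only nonsmooth part, so the nonsmooth mountain-pass machinery applies, and the comparison $w_\alpha>\underline u_\lambda$ a.e.\ (Remark~\ref{key_obs}) then shows the cut-off solution solves the original problem. Your route could be repaired by inserting the same truncation, but as it stands the Clarke-critical-point step and the limit passage in the singular term are not justified.
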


\begin{theorem}\label{mainres2}
	Let $u_{\alpha}$ be a weak solution to
	problem
	\eqref{main prob}. Then for any sequence $(\alpha_i)\subset(0,\alpha_0)$ with $\alpha_i\rightarrow 0^+$, there exists a subsequence such that $u_{\alpha_i}\rightarrow u_0$ in $Z$, where $u_0$ is a nontrivial weak solution to the following problem
	\begin{align*}
		\begin{split}
			a(-\Delta)^{s(\cdot)}u+b(-\Delta)u=&\lambda |u|^{-\gamma(x)-1}u+\left(\int_{\Omega}\frac{F(y,u(y))}{|x-y|^{\mu(x,y)}}dy\right)f(x,u)\\
			&+\eta H(u)|u|^{r(x)-2}u,~\text{in}~\Omega,\\
			u=&0,~\text{in}~\mathbb{R}^N\setminus\Omega.
		\end{split}
	\end{align*}
	which we shall denote by $(P_0)$.
\end{theorem}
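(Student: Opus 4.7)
The strategy is to extract a uniform-in-$\alpha$ compactness property from the solutions produced by Theorem~\ref{mainres1}, pass to the weak and a.e.~limit in every term of the equation for $u_{\alpha_i}$, and finally upgrade weak to strong convergence in $Z$ while verifying that the limit is a nontrivial weak solution of $(P_0)$.

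First I would establish uniform boundedness. The solutions $u_{\alpha_i}$ constructed in Theorem~\ref{mainres1} are critical points of an energy functional $J_{\alpha_i}$ whose mountain-pass level is non-decreasing in $\alpha_i$ (since a smaller $\alpha$ enlarges the set where the Heaviside activates and hence decreases the functional pointwise); in particular the energy values $J_{\alpha_i}(u_{\alpha_i})$ are bounded above by a constant independent of $\alpha_i \in (0,\alpha_0)$. Combining this with $\langle J'_{\alpha_i}(u_{\alpha_i}), u_{\alpha_i}\rangle=0$ and the Ambrosetti--Rabinowitz condition $(F_2)$ yields $\|u_{\alpha_i}\|_Z \leq C$ uniformly. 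By reflexivity of $Z$ and the compact embedding into subcritical variable Lebesgue spaces available in the variable-order fractional setting, a subsequence (not relabeled) satisfies $u_{\alpha_i}\rightharpoonup u_0$ in $Z$, $u_{\alpha_i}\to u_0$ in $L^{q(\cdot)}(\Omega)$ for every subcritical $q$, and $u_{\alpha_i}\to u_0$ pointwise a.e.

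Next I would pass to the limit in the weak formulation. The linear mixed principal part handles itself by weak convergence; the Choquard term is controlled by the variable-exponent Hardy--Littlewood--Sobolev inequality of Alves et al.~\cite{alv1} combined with a.e.~convergence and $(F_1)$; the singular term $\lambda|u_{\alpha_i}|^{-\gamma(x)-1}u_{\alpha_i}\varphi$ is passed to the limit by dominated convergence once a uniform integrable majorant is produced from the a priori lower pointwise bounds on $u_{\alpha_i}$ inherited from the proof of Theorem~\ref{mainres1}. The hardest step is the discontinuous Heaviside term. On $\{u_0 \neq 0\}$ one has $H(u_{\alpha_i}(x)-\alpha_i)\to H(u_0(x))$ a.e., since $\alpha_i\to 0^+$ and $u_{\alpha_i}(x)\to u_0(x)$ force $u_{\alpha_i}(x)-\alpha_i$ to have the same eventual sign as $u_0(x)$. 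The only trouble spot is $\{u_0=0\}$, on which $|u_{\alpha_i}|^{r(x)-1}\to 0$ a.e.; exploiting the uniform $L^{r(\cdot)}(\Omega)$ bound together with Vitali's convergence theorem, the contribution of this set vanishes, so that $\int_\Omega H(u_{\alpha_i}-\alpha_i)|u_{\alpha_i}|^{r(x)-2}u_{\alpha_i}\varphi \,dx \to \int_\Omega H(u_0)|u_0|^{r(x)-2}u_0\varphi\, dx$ for every admissible test function $\varphi$. This is the step I expect to be the main obstacle, because equi-integrability must be checked when $r(x)$ saturates the critical exponent $2_s^*(x)$, which requires invoking a concentration-compactness argument in the variable-exponent regime.

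Finally I would upgrade to strong convergence in $Z$ and confirm nontriviality. Nontriviality of $u_0$ follows from a uniform positive lower bound on the mountain-pass value $c_{\alpha_i} \geq c_0 > 0$, combined with the additional property $|\{u_{\alpha_i}>\alpha_i\}|>0$ from Theorem~\ref{mainres1}, which rules out $u_0 \equiv 0$ after passing to the energy limit. Strong convergence is obtained by testing the difference of the two equations against $u_{\alpha_i}-u_0$: the weak convergence and strong subcritical convergence kill the lower-order, singular, and convolution terms, a Brezis--Lieb-type decomposition controls the critical discontinuous term, and the coercivity of the quadratic principal part then forces $\|u_{\alpha_i}-u_0\|_Z \to 0$, completing the proof.
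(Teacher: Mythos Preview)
Your proposal is correct and follows essentially the same architecture as the paper: uniform $Z$-bounds on $u_{\alpha_i}$ from the energy/derivative combination, weak and a.e.\ subsequential convergence via reflexivity and the compact embedding, passage to the limit term-by-term in the weak formulation (the singular term via the uniform lower barrier $\underline{u}_\lambda$, the Choquard term via the variable-exponent Hardy--Littlewood--Sobolev inequality, the discontinuous term via pointwise convergence plus an $L^{r(\cdot)}$ bound), then strong convergence and nontriviality from the positive lower bound $c_{\alpha_i}\ge r_2$.

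The one substantive difference is in how strong convergence in $Z$ is argued. You propose to test the difference of the two equations against $u_{\alpha_i}-u_0$ and run a Brezis--Lieb/concentration argument directly for the critical Heaviside term. The paper instead recycles its $(PS)_c$ machinery: using the estimate $\bar{I}(u)=\bar{I}_0(u)+o_\alpha(1)$ it recognises $(u_{\alpha_i})$ as a Palais--Smale sequence for the limiting functional $\bar{I}_0$ at the level $c_0=\lim c_{\alpha_i}\in[r_2,c^*)$, and then Theorem~\ref{PS_THM} (with $\alpha=0$) delivers compactness at once. Your route is more self-contained; the paper's is more economical because the delicate critical-exponent analysis has already been carried out once in Theorem~\ref{PS_THM} and need not be repeated. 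One small ordering issue: you argue nontriviality before strong convergence, but the implication ``$\bar{I}_0(u_{\alpha_i})\to c_0>0\Rightarrow u_0\neq 0$'' genuinely requires strong convergence first, since without it concentration could carry all the energy while leaving $u_0=0$; the paper correctly concludes $\bar{I}_0(w_0)=c_0\ge r_2>0$ only \emph{after} establishing $u_{\alpha_i}\to u_0$ in $Z$.
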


The paper is organized as follows. In Section \ref{functional setting}, we introduce some basic workspace and properties and crucial technical lemmas. In Section \ref{auxresults}, we construct cut-off functions, auxiliary results, and verify all conditions of the non-smooth critical point theory. In Section \ref{proof}, we prove the main results. In Section \ref{appendix}, we explain some more details.
\section{Preliminaries}\label{functional setting}
 Let $X$ be a space which is defined as follows
$$X=\big\{u:\mathbb{R}^N\rightarrow\mathbb{R}:~u
\
\hbox{  is  measurable,}
\
u|_{\Omega}\in L^2(\Omega),
\
 \frac{u(x)-u(y)}{|x-y|^{\frac{N+2s(x,y)}{2}}}\in L^2(Q)\big\}$$
and is equipped with the Gagliardo norm
$$\|u\|_{X}=\|u\|_{2}+\left(\iint_{Q}\frac{|u(x)-u(y)|^2}{|x-y|^{N+2s(x,y)}}dydx\right)^{\frac{1}{2}},$$
where $$\Omega\subset\mathbb{R}^N,
\
Q=\mathbb{R}^{2N}\setminus((\mathbb{R}^N\setminus\Omega)\times(\mathbb{R}^N\setminus\Omega)).$$
 Here, $\|u\|_2$ refers to the $L^2$-norm of $u$. 
 
We shall mostly use the subspace $X_0$ of $X$ which is defined as
$$X_0=\left\{u\in X:u=0~\text{in}~\mathbb{R}^N\setminus\Omega\right\}$$
with the norm
$$\|u\|_{s(\cdot,\cdot),2}=\left(\iint_{Q}\frac{|u(x)-u(y)|^{2}}{|x-y|^{N+2s(x,y)}}dydx\right)^{\frac{1}{2}}.$$
Then $(X_0,\|\cdot\|_{s(\cdot,\cdot),2})$ is a Hilbert space with respect to the norm $\|\cdot\|_{s(\cdot,\cdot),2}$ induced by the inner product $$
\langle u,v\rangle=\iint_{Q}\frac{(u(x)-u(y))(v(x)-v(y))}{|x-y|^{N+2s(x,y)}}dxdy$$ for each $u,v\in X_0$. The space $X_{0}$ is also separable and reflexive, see Azroul et al.~\cite[Lemma 2.3]{m4}.

 In order to study  \eqref{main prob}, we  define a new workspace
$$Z=\left\{u\in L^2
(\Omega):u=0~\text{a.e. in}~\mathbb{R}^N\setminus\Omega, a\|u\|_{s(\cdot,\cdot),2}^2+b\|u\|_{1,2}^2<\infty\right\}$$
with the norm:
\begin{eqnarray}\label{norm}
	\|u\|&=&(a\|u\|_{s(\cdot,\cdot),2}^2+b\|u\|_{1,2}^2)^{\frac{1}{2}},
\end{eqnarray}
where $$\|u\|_{1,2}^2=\int_{\Omega}|\nabla u|^2dx.$$

A standard approach from functional analysis guarantees the reflexivity of  Banach space $Z$.
\begin{remark}\label{space_equiv}	We make some useful observations.
	\begin{enumerate}
		\item The spaces
		$X,  X_0\subset W^{s(x),2}(\Omega)$,   where
		$$W^{s(x),2}(\Omega)=\left\{u\in L^2(\Omega):\frac{u(x)-u(y)}{|x-y|^{\frac{N+2s(x,y)}{2}}}\in L^2(\Omega)\right\}$$
		is the usual fractional order
		Sobolev space, endowed with the Gagliardo norm
		 $$\|u\|_{W^{s(\cdot),2}(\Omega)}=\|u\|_{2}+\left(\iint_{\Omega\times\Omega}\frac{|u(x)-u(y)|^2}{|x-y|^{N+2s(x,y)}}dydx\right)^{\frac{1}{2}}.$$
		\item  Due to the ``boundary condition'' that $u=0$ in $\mathbb{R}^N\setminus\Omega$, the space $X_0$ is equivalent to $W_0^{s(x),2}(\mathbb{R}^N)$.
		\item The first order Sobolev space will be defined as
		$$W_0^{1,2}(\Omega)=\left\{u\in L^2(\Omega):u|_{\partial\Omega}=0, \int_{\Omega}|\nabla u|^2dx<\infty\right\}.$$
		\item Any uniform constant appearing in an inequality will be denoted by the symbol $C$ with or without a prefix/suffix.
	\end{enumerate}
\end{remark}
 Recall that the Lebesgue space of variable exponent is defined as follows (see Papageorgiou et al. \cite{RD}).
\begin{align}\label{lpx}
	L^{p(x)}(\Omega)&=\left\{u:\Omega\rightarrow\mathbb{R}:~u~\text{is measurable and}~\int_{\Omega}|u(x)|^{p(x)}dx<\infty\right\}
\end{align}
and it is
equipped with the  Luxemburg norm
$$\|u\|_{p(\cdot)}=\inf\left\{\mu>0:\int_{\Omega}\left|\frac{u(x)}{\mu}\right|^{p(x)}dx<1\right\}.$$

Furthermore, we define the best Sobolev constants as follows:
\begin{align}\label{sobolev_constant}
	S_1&=\underset{u\in X_0\setminus\{0\}}{\inf}\frac{\iint_{Q}\frac{|u(x)-u(y)|^{2}}{|x-y|^{N+2s(x,y)}}dydx}{\|u\|_{r(\cdot)}^2},
\end{align}
\begin{align}\label{sobolev_constant_2}
	S_2&=\underset{u\in W_0^{1,2}(\Omega)\setminus\{0\}}{\inf}\frac{\int_{\Omega}|\nabla u|^2dx}{\|u\|_{2^*}^2},
\end{align}
where $2^*=\frac{2N}{N-2}$.
An important inequality which will be used consistently throughout the paper is the H\"{o}lder inequality for the Lebesgue space of variable exponent, see Fan-Zhao~\cite[Theorem 1.3]{Fan2001}.

 Finally, the modular function is defined to be the mapping $\rho : L^{q(x)}(\Omega)\rightarrow\mathbb{R}$ which is defined as
\begin{align*}
	\rho_{q(x)}(u):= \int_{\Omega}|u|^{q(x)}dx.
\end{align*}
For some important properties of this modular function, we refer the reader to \cite {RD}.

\section{Auxiliary results}\label{auxresults}
 We define
$$J_{\alpha}(t)=\int_{0}^{t}j_{\alpha}(s)ds=\frac{1}{r(x)}H(t-\alpha)(t^{r(x)}-\alpha^{r(x)})$$
to be the primitive of
$$j_{\alpha}(t)=H(t-\alpha)|t|^{r(x)-2}t, 
g(x,t)=\left(F*\frac{1}{|x|^{\mu(\cdot)}}\right)f(x,t), 
F(x,t)=\int_{0}^{t}f(x,s)ds.$$

We call $u\in Z$  a {\it weak solution} to problem $\eqref{main prob}$ if for every $\varphi\in Z$, we have
\begin{align}
	&a\iint_{Q}\frac{(u(x)-u(y))}{|x-y|^{N+2s(x,y)}}(\varphi(x)-\varphi(y)) dx dy\nonumber+b\int_{\Omega\time\Omega}\nabla u\cdot\nabla\varphi dx\\
	&=\lambda\int_{\Omega}|u|^{-\gamma(x)-1}u\varphi dx+\int_{\Omega}g(x,u)\varphi dx+\eta\int_{\Omega}j_{\alpha}(u)\varphi dx.
\end{align}
Clearly, the weak solution for equation $\eqref{main prob}$ is a {\it critical point} of the following functional
\begin{align}
	\begin{split}\label{energy fnal}
		I(u)=&\frac{a}{2}\iint_{Q}\frac{|u(x)-u(y)|^{2}}{|x-y|^{N+2s(x,y)}}dxdy+\frac{b}{2}\int_{\Omega}|\nabla u|^2dx-\lambda\int_{\Omega}\frac{|u|^{1-\gamma(x)}}{1-\gamma(x)}dx\\
		&-\frac{1}{2}\int_{\Omega}\left(F*\frac{1}{|x|^{\mu(\cdot)}}\right)F(x,u)dx-\eta\int_{\Omega}J_{\alpha}(u)dx\\
		 =&\frac{1}{2}\|u\|^2-\lambda\int_{\Omega}\frac{|u|^{1-\gamma(x)}}{1-\gamma(x)}dx-\frac{1}{2}\int_{\Omega}\left(F*\frac{1}{|x|^{\mu(\cdot)}}\right)F(x,u)dx\\
&-\eta\int_{\Omega}J_{\alpha}(u)dx.
	\end{split}
\end{align}
However, the functional $I$ is not even differentiable, owing to the existence of the singular term and discontinuous nonlinear term. Therefore, we replace the original functional $I$ with
the cut-off functional $\bar{I}$.
\subsection{Cut-off functional}\label{existence}
 To overcome the nondifferentiability of the functional $I$ due to the singular term,  we define the following cut-off function
$$\phi(x,t)=\begin{cases}
	|t|^{-\gamma(x)-1}t,& ~\text{if}~ |t|>\underline{u}_{\lambda}(x)\\
	\underline{u}_{\lambda}(x)^{-\gamma(x)},&~\text{if}~ |t|\leq\underline{u}_{\lambda}(x).
\end{cases}$$
Here, $\underline{u}_{\lambda}$ refers to the solution of the singular problem as in Theorem \ref{imp_thm} (see Section~\ref{appendix}). We get the well-defined {\it cut-off} problem:
\begin{align}\label{main3}
	\begin{split}
		a(-\Delta)^{s(\cdot)}u-b\Delta u&= \lambda \phi(x,t)+\left(F*\frac{1}{|x|^{\mu(\cdot)}}\right)f(x,u)\\&+\eta H(u-\alpha)|u|^{r(x)-2}u~\text{in}~\Omega,\\
		u&=0~\text{in}~\mathbb{R}^N\setminus\Omega.
	\end{split}
\end{align}
We say that
$\tilde{u}\in Z$ is a {\it weak solution} for equation $\eqref{main3}$ if we have
\begin{align*}
	&a\iint_{Q}\frac{(\tilde{u}(x)-\tilde{u}(y))}{|x-y|^{N+2s(x,y)}}(\varphi(x)-\varphi(y)) dx dy\nonumber+b\int_{\Omega\time\Omega}\nabla \tilde{u}\cdot\nabla\varphi dx\\
	&=\lambda\int_{\Omega}\phi(x,\tilde{u})\varphi dx
	+\int_{\Omega}\left(\int_{\Omega}\frac{F(y,\tilde{u}(y))}{|x-y|^{\mu(x,y)}}dy\right)f(x,\tilde{u})\varphi dx
	+\eta\int_{\Omega}j_{\alpha}(\tilde{u})\varphi dx
\end{align*}
for any $\varphi\in Z$.
Consequently, we shall  define the energy functional
\begin{align}
	\begin{split}\label{energy3}
		\bar{I}(u)=&\frac{a}{2}\iint_{Q}\frac{|u(x)-u(y)|^{2}}{|x-y|^{N+2s(x,y)}}dxdy+\frac{b}{2}\int_{\Omega}|\nabla u|^2dx-\lambda\int_{\Omega}\Phi(x,u)dx\\
		&-\frac{1}{2}\int_{\Omega}\left(\int_{\Omega}\frac{F(y,u(y))}{|x-y|^{\mu(x,y)}}dy\right)F(x,u) dx-\eta\int_{\Omega}J_{\alpha}(u)dx,
	\end{split}
\end{align}
where $\Phi$ is the primitive of the function $\phi$, i.e. $\Phi(x,t)=\int_0^t\phi(x,s)ds$.
\begin{remark}\label{positivity}
	If $u_0$ is a global minimizer of $I$, then clearly $I(u_0)=I(|u_0|)$. Trivially, we have $I(u_0)\leq I(|u_0|)$. Furthermore, using
	$$\iint_{Q}\frac{||u(x)|-|u(y)||^{2}}{|x-y|^{N+2s(x,y)}}dxdy\leq \iint_{Q}\frac{|u(x)-u(y)|^{2}}{|x-y|^{N+2s(x,y)}}dxdy,$$
	$$-\int_{\Omega}H(|u|-\alpha)|u|^{r(x)}dx\leq-\int_{\Omega}H(u-\alpha)|u|^{r(x)}dx,$$
	and the property of $F$ that it is even, we have $I(u_0)\geq I(|u_0|)$.
	Hence, we seek for a minimizer of the functional $I$ which is nonnegative. Furthermore, the solution is positive a.e. in $\Omega$ due to singular terms.
\end{remark}
\begin{remark}\label{key_obs} We note that  the weak solution of new problem \eqref{main3} with $u>\underline{u}_{\lambda}$ is also a weak solution to original problem \eqref{main prob}. Refer to the proof of Theorem \ref{imp_thm} in Section~\ref{appendix}.
\end{remark}
\subsection{Definitions from convex analysis}
 Let us now see a few definitions that are very important in the analysis of functionals that are nondifferentiable but {\it locally Lipschitz continuous} (see \cite[Section $2$]{tava1}).
\begin{definition}(see  \cite[Section $2$]{tava1})
	Let $J$ be a locally Lipschitz continuous functional. The {\it directional derivative} of $J$ at $u$ in the direction of $z\in V$ is defined by
	$$\tilde{J}(u;z)=\lim\limits_{h\rightarrow0}\sup_{\xi\downarrow0}\frac{J(u+h+\xi z)-J(u+h)}{\xi}.$$
\end{definition}
 From dos Santos-Tavares \cite{tava1} we know that $\tilde{J}(u;\cdot)$ is convex and continuous. The subdifferential of $\tilde{J}(u;\cdot)$ at $w\in V$ is defined to be the following set:
$$\partial\tilde{J}(u;w)=\{\upsilon\in V^*:\tilde{J}(u;z)\geq\tilde{J}(u;w)+\langle \upsilon, z-w\rangle, ~z\in V\}.$$
Here, $\langle\cdot,\cdot\rangle$ denotes the duality pair between $V$ and $V^*$ (the dual space of $V$). The {\it generalized gradient}
(see \cite{tava1}) of $J$ at $u$ is defined as
$$\partial J(u)=\{\upsilon\in V^*:\langle \upsilon, z\rangle\leq \tilde{J}(u;z), ~z\in V\},$$
which is convex and weak$^*$- compact by \cite{Chang 3}.
Clearly, $\partial J(u)$ is nonempty and is the subdifferential of $\tilde{J}(u;0)$ as $\tilde{J}(u;0)=0$.
{Then by} \cite{Chang 3}, {the function}
\begin{equation}\label{sub}
	\text{{$\Lambda_J(u)=\min\{\|\upsilon\|_{V^*}:\upsilon \in \partial J(u)\}$}}
\end{equation}
{exists and is lower semi-continuous.} 

A function $\bar{u}$ is said to be a {\it critical point} of $J$ if $0\in \partial J(\bar{u})$, whereas $c\in
\mathbb{R}$ is said to be a {\it critical value} of $J$ if $J(\bar{u})=c$ for a critical point $\bar{u}\in V$.
If $J$ is a $C^1$ {functional} then $\partial J(u)=\{J^\prime(u)\}$.

\begin{remark}\label{SUBDIFF}
	Define 
	\begin{align*}\bar{I}_{\eta}(u)=&\int_{\Omega}J_{\alpha}(u)dx
	\end{align*} 
	and
\begin{align*} 
	E_{0}(u)=&\frac{\|u\|^2}{2}-\int_{\Omega}\Phi(x,u)dx-\int_{\Omega}\left(F*\frac{1}{|x|^{\mu(\cdot)}}\right)F(x,u)dx.
	\end{align*} Then
	\begin{eqnarray}\label{3.9}\partial\bar{I}(w)\subset \{E'_{0}(w)\}-\partial\bar{I}_{\eta}(w)~\text{for all}~w\in Z.\end{eqnarray}
\end{remark}
\begin{remark}\label{positive_soln_reason}
	We shall  be interested in solutions that are greater than $\alpha>0$.
	We shall  show that $\{u>\alpha\}$ is of positive Lebesgue measure for a small range of $\alpha$.

	Next, consider the energy functional
	 	\begin{align*}
\mathcal{I}(u)=&\frac{1}{2}\|u\|^2-\lambda\int_{\Omega}\frac{(u^+)^{1-\gamma(x)}}{1-\gamma(x)}dx-\frac{1}{2}\int_{\Omega}\left(F*\frac{1}{|x|^{\mu(\cdot)}}\right)F(x,u^+)dx\\&-\eta\int_{\Omega}J_{\alpha}(u^+)dx.	 \end{align*}
	A critical point of $\mathcal{I}$ is clearly a weak solution of \eqref{main prob}. Therefore we have
	\begin{align}0=&a\iint_{Q}\frac{(u(x)-u(y))}{|x-y|^{N+2s(x,y)}}(\varphi(x)-\varphi(y)) dx dy\nonumber+b\int_{\Omega\time\Omega}\nabla u\cdot\nabla\varphi dx-\nonumber\\
		&\lambda\int_{\Omega}(u^+)^{-\gamma(x)}\varphi dx-\frac{1}{2}\int_{\Omega}\left(F*\frac{1}{|x|^{\mu(\cdot)}}\right)f(x,u^+)\varphi dx-\eta\int_{\Omega}j_{\alpha}(u^+)\varphi dx.\nonumber
	\end{align}
	On testing with $\varphi=u^-$, we get that the Lebesgue measure of the set $\{u<0\}$ is zero. Thus without loss of generality we may assume that $u>0$ a.e. in $\Omega$.
\end{remark}
 We recall some properties of the functionals $\bar{I}_\eta$ in the following lemma, the proof of which follows from the argument in dos Santos-Figueiredo \cite[Lemma $3.1$]{21}.
\begin{lemma}\label{properties I}
	$\bar{I}_\eta:L^{r(x)}(\Omega)\rightarrow\mathbb{R}$ is a locally Lipschitz functional
	such that $\partial \bar{I}_\eta(u)\subset [\underline{f}_\alpha(u(x)),\overline{f}_\alpha(u(x))]$ a.e. in $\Omega$ and
	\begin{eqnarray}
		\begin{split}
			[\underline{f}_\alpha(u),\overline{f}_\alpha(u)]=&\begin{cases}
				\{0\}, & ~\text{if}~u<\alpha\\
				[0,u^{r(x)-1}], &~\text{if}~u=\alpha\\
				\{u^{r(x)-1}\}, &~\text{if}~u>\alpha.
			\end{cases}
		\end{split}
	\end{eqnarray}
\end{lemma}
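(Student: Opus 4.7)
The plan is to prove the two assertions separately: first, that $\bar{I}_\eta$ is locally Lipschitz on $L^{r(x)}(\Omega)$; second, that its Clarke generalized gradient lies in the claimed pointwise interval. The first is a direct modular/Hölder estimate; the second is an application of the Aubin--Clarke theorem combined with the one-variable Clarke calculus for the jump discontinuity of $j_\alpha$ at $t=\alpha$, essentially in the spirit of the argument in dos Santos--Figueiredo cited just above.

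For local Lipschitz continuity, I would start from the elementary pointwise estimate $|j_\alpha(t)|\leq |t|^{r(x)-1}$, which by the fundamental theorem of calculus gives
\begin{equation*}
|J_\alpha(t_1)-J_\alpha(t_2)|\leq |t_1-t_2|\,\bigl(|t_1|+|t_2|\bigr)^{r(x)-1}
\end{equation*}
pointwise in $x$. Integrating over $\Omega$ and applying the variable exponent Hölder inequality (with conjugate exponent $r'(x)=r(x)/(r(x)-1)$), I obtain
\begin{equation*}
|\bar{I}_\eta(u)-\bar{I}_\eta(v)|\leq C\bigl\|(|u|+|v|)^{r(\cdot)-1}\bigr\|_{r'(\cdot)}\|u-v\|_{r(\cdot)}.
\end{equation*}
The norm $\|(|u|+|v|)^{r(\cdot)-1}\|_{r'(\cdot)}$ is controlled by the modular $\rho_{r(x)}(|u|+|v|)$ and thus bounded on bounded sets of $L^{r(x)}(\Omega)$, which yields local Lipschitz continuity.

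For the gradient inclusion, I would invoke Aubin--Clarke (Chang's theorem for integral functionals of Carathéodory integrands that are locally Lipschitz in the state variable): every $\xi\in\partial \bar{I}_\eta(u)$ is represented by a measurable selection $\xi(x)\in\partial_t J_\alpha(x,u(x))$ a.e. It therefore suffices to compute, pointwise in $x$, the one-dimensional Clarke subdifferential of $t\mapsto J_\alpha(x,t)$. Since $j_\alpha(\cdot)$ is continuous on $\mathbb{R}\setminus\{\alpha\}$ and discontinuous at $t=\alpha$ with one-sided limits $j_\alpha(\alpha^-)=0$ and $j_\alpha(\alpha^+)=\alpha^{r(x)-1}$, the standard Clarke formula for Lipschitz functions of one real variable yields $\partial_t J_\alpha(x,t)=\{0\}$ for $t<\alpha$, $\partial_t J_\alpha(x,t)=\{t^{r(x)-1}\}$ for $t>\alpha$, and $\partial_t J_\alpha(x,\alpha)=[0,\alpha^{r(x)-1}]$, which is exactly $[\underline{f}_\alpha(u),\overline{f}_\alpha(u)]$.

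The main technical obstacle is ensuring that the Aubin--Clarke representation applies cleanly with a variable exponent $r(x)$ that may saturate the critical exponent $2^\ast_s(x)$; in particular, one must verify the Carathéodory and growth hypotheses of the integral representation theorem in the variable exponent framework. I would handle this by working on truncations $r(x)\leq R$ and then using the density/continuity of the embedding of $Z$ into $L^{r(x)}(\Omega)$ established in Section~\ref{functional setting}, and by invoking the variable exponent measurable selection argument already used in dos Santos--Figueiredo~\cite{21}, whose structure carries over verbatim once the Lipschitz estimate above is in place.
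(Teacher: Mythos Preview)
Your proposal is correct and follows exactly the route the paper indicates: the paper does not give an explicit proof but simply refers to dos Santos--Figueiredo \cite[Lemma~3.1]{21}, and your two-step argument (local Lipschitz via the pointwise bound $|j_\alpha(t)|\le|t|^{r(x)-1}$ plus variable-exponent H\"older, then Chang/Aubin--Clarke applied to the one-variable primitive $J_\alpha$) is precisely that argument adapted to the present setting. Your final paragraph somewhat overstates the difficulty: Chang's theorem carries over to $L^{r(\cdot)}(\Omega)$ directly once the growth bound and H\"older inequality are available, so no truncation $r(x)\le R$ or passage through $Z$ is needed---the lemma is stated and proved entirely on $L^{r(\cdot)}(\Omega)$.
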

\subsection{The Mountain Pass Theorem}
 An essential result which will be required henceforth is the Mountain Pass Theorem for a locally Lipschitz nondifferentiable functional, due to { Grossinho-Tersian} \cite{26}, and { R\u{a}dulescu} \cite{32}.
 The following theorem gives an energy level, below which the $(PS)_c$ condition is satisfied by the functional $\bar{I}$.
\begin{theorem}\label{PS_THM}
	The functional $\bar{I}$ satisfies the $(PS)_c$-condition for\\
	$$c<
	 \left(\frac{1}{2}-\frac{1}{\kappa}\right){S_1}^{\frac{\mathcal{V}}{\mathcal{V}-2}}-\left(\frac{1}{2}-\frac{1}{\kappa}\right)^{-\frac{1-\delta}{1+\delta}}\left(\frac{\lambda}{1-\gamma^+}C(\Omega,N)\right)^{\frac{2}{1+\delta}}=c_*$$ where $c_*>0$ for any $\lambda\in(0,\lambda')$, $\lambda'$ being sufficiently small, where $\mathcal{V}$ is a constant and $2<\kappa<r^-<{2_s^*}^-$.
\end{theorem}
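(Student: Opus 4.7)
The plan is to adapt the standard concentration-compactness argument for Sobolev-critical problems to the present non-smooth, mixed local-nonlocal, variable-exponent setting, using Remark~\ref{SUBDIFF} and Lemma~\ref{properties I}. Let $\{u_n\} \subset Z$ be a Palais--Smale sequence at level $c$. By Remark~\ref{SUBDIFF}, choose $\xi_n \in \partial \bar{I}_\eta(u_n)$ with $E'_0(u_n) - \eta\xi_n \to 0$ in $Z^*$. Fixing $\kappa \in (2, \min\{\theta, r^-\})$, I would examine the combination
\[
\bar I(u_n) - \tfrac{1}{\kappa}\langle E'_0(u_n) - \eta\xi_n,\, u_n\rangle.
\]
The quadratic part contributes $\bigl(\tfrac{1}{2}-\tfrac{1}{\kappa}\bigr)\|u_n\|^2$; the Choquard part is nonpositive by $(F_2)$ together with the symmetry of $|x-y|^{-\mu(x,y)}$, since $\kappa \leq \theta$; the discontinuous critical part is nonpositive since $\kappa < r^-$, via the pointwise identity $\xi_n u_n \geq r(x) J_\alpha(u_n)$ obtained from Lemma~\ref{properties I}; and the singular piece is dominated, using the H\"older inequality for variable exponents and $Z \hookrightarrow L^{1-\gamma^+}(\Omega)$, by $C\lambda \|u_n\|^{1-\gamma^-}$ with $1-\gamma^- < 2$, hence absorbed by Young's inequality into the quadratic term. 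This yields boundedness; up to a subsequence, $u_n \rightharpoonup u$ in $Z$, $u_n \to u$ in $L^{q(\cdot)}(\Omega)$ for every subcritical $q(\cdot)$, and a.e.\ in $\Omega$, and $u$ is a critical point of $\bar I$ by upper semicontinuity of $\partial \bar I_\eta$.

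The main obstacle is ruling out concentration on the critical discontinuous term at points where $r(x) = 2_s^*(x)$. I would invoke a Lions-type concentration-compactness principle in the variable-order mixed setting, producing an at most countable set $\{x_j\}_{j \in J} \subset \bar\Omega$ and nonnegative numbers $\{\mu_j\}, \{\nu_j\}$ with
\[
|u_n|^{r(x)}\,dx \rightharpoonup |u|^{r(x)}\,dx + \sum_{j \in J}\nu_j\,\delta_{x_j},\qquad d\rho_n \rightharpoonup d\rho \geq d\rho(u) + \sum_{j \in J}\mu_j\,\delta_{x_j},
\]
where $d\rho_n$ is the mixed energy density associated with $\|u_n\|^2$, and $S_1\,\nu_j^{2/r(x_j)} \leq \mu_j$ at each atom on $\{r = 2_s^*\}$. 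Testing the inclusion $E'_0(u_n) - \eta\xi_n \to 0$ against $u_n\psi_{\varepsilon, j}$ with $\psi_{\varepsilon, j}$ a smooth cut-off around $x_j$, and passing $n \to \infty$ followed by $\varepsilon \to 0^+$, the subcritical, Choquard, and singular pieces vanish by dominated convergence, while Lemma~\ref{properties I} gives $\mu_j \leq \eta\nu_j$. Combined with the reverse Sobolev bound, either $\nu_j = 0$ or $\nu_j \geq (S_1/\eta)^{\mathcal V/(\mathcal V - 2)}$, where $\mathcal V = r(x_j)$.

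Suppose, for contradiction, that some $\nu_{j_0} > 0$. Then the limit of the combination estimated in the first step admits a contribution from the atom of size $\bigl(\tfrac{1}{2}-\tfrac{1}{\kappa}\bigr) S_1^{\mathcal V/(\mathcal V - 2)}$, while the singular part, estimated via H\"older and a Young inequality with parameter $\tfrac{1}{2}\bigl(\tfrac{1}{2}-\tfrac{1}{\kappa}\bigr)$, produces a negative remainder bounded below by
\[
-\left(\tfrac{1}{2} - \tfrac{1}{\kappa}\right)^{-\frac{1-\delta}{1+\delta}}\left(\tfrac{\lambda}{1-\gamma^+}\,C(\Omega,N)\right)^{\frac{2}{1+\delta}},
\]
with $\delta$ the singularity parameter appearing in the statement. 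All other pieces have definite sign or cancel in the limit, so $c \geq c_*$, contradicting $c < c_*$. Hence $J = \emptyset$ and no concentration occurs.

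Once concentration is excluded, $\int_\Omega |u_n|^{r(x)}\,dx \to \int_\Omega |u|^{r(x)}\,dx$, and a Brezis--Lieb decomposition applied to both the Gagliardo and the Dirichlet seminorms, combined with strong convergence of the Choquard and singular contributions, gives $\|u_n - u\| \to 0$ in $Z$. The crucial difficulty lies in the second step: correctly setting up the concentration-compactness principle with variable critical exponent $2_s^*(\cdot)$ for the mixed local-nonlocal energy, and executing the Young-inequality balancing so that the explicit constant $c_*$ displayed in the statement emerges precisely.
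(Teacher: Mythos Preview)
Your boundedness step and the singular/Choquard estimates match the paper's treatment. The substantive difference is in how you rule out loss of compactness.

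The paper does \emph{not} invoke a Lions-type concentration-compactness principle at all. Instead, after extracting a weak limit $u$, it simply sets $L=\lim_i\|u_i-u\|^2$ and argues directly: testing the almost-Euler equation against $u_i-u$ and applying the Brezis--Lieb splitting to both $\|\cdot\|^2$ and $\int_\Omega|\cdot|^{r(x)}\chi_{\{\cdot>\alpha\}}\,dx$, one obtains
\[
o_i(1)=\|u_i-u\|^2-\bigl\|u_i\chi_{\{u_i>\alpha-1/n\}}-u\chi_{\{u>\alpha\}}\bigr\|_{r(\cdot)}^{\mathcal V}+o_i(1),
\]
and then the definition of $S_1$ forces $L\ge S_1^{\mathcal V/(\mathcal V-2)}$ whenever $L>0$. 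Feeding this back into the energy inequality $c\ge(\tfrac12-\tfrac1\kappa)(L+\|u\|^2)-\tfrac{\lambda}{1-\gamma^+}C(\Omega,N)\|u_i\|^{1-\delta}$ and optimizing via Young's inequality yields $c\ge c_*$, a contradiction.

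Your route via concentration-compactness is in principle sound and is the more ``classical'' template for critical problems, but in this setting it is considerably heavier: a CCP for the mixed local--nonlocal energy with variable order $s(\cdot,\cdot)$ and variable critical exponent $2_s^*(\cdot)$ is not available off the shelf, and you would have to prove it before the cut-off/atom dichotomy argument can even begin (you correctly flag this as the crucial difficulty). The paper's direct Brezis--Lieb argument buys exactly the avoidance of that machinery: no measures, no atoms, no localization, just a single global limit $L$ and the Sobolev constant $S_1$. What your approach would buy, were the CCP in hand, is finer information about \emph{where} concentration occurs; but since the goal is only strong convergence, that information is not needed here.
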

\begin{proof}
	We first choose those sequences that will be sufficient to prove this lemma. We shall discard it if $\{u_i\}\subset X_0$ is an eventually zero sequence.  Assume that $\{u_i\}\subset X_0$ is a sequence with infinitely many terms equal to $0$. Then we can choose  a subsequence of $\{u_i\}$ that is nonzero. Let this sequence $\{u_i\}$ be such that
	\begin{align}\label{ps 1}
		\bar{I}(u_i)\rightarrow c~\text{and}~ \Lambda_{\bar{I}}(u_i)\rightarrow 0
	\end{align}
	as $i\rightarrow\infty$. Let $\{v_i\}\subset Z'$ be such that $\Lambda_{\bar{I}}(u_i)=\|v_i\|_{Z'}$, where $Z'$ is the dual space of $Z$. By Remark \ref{SUBDIFF}, there exists $\eta_i\in\partial\bar{I}_{\eta}(u_i)$ satisfying the following
	\begin{align*}
		\begin{split}
		\langle v_i,\varphi\rangle=&a\iint_{Q}\frac{(u_(x)-u_i(y))}{|x-y|^{N+2s(x,y)}}(\varphi(x)-\varphi(y))dxdy+b\int_{\Omega}\nabla u_i\cdot\nabla \varphi dx\\
		& -\lambda\int_{\Omega}\phi(x,u_i)\varphi dx-\int_{\Omega}\left(\int_{\Omega}\frac{F(y,u_i(y))}{|x-y|^{\mu(x,y)}}dy\right)f(x,u_i)\varphi dx-\langle\eta_i,\varphi\rangle.
		\end{split}
	\end{align*}
	For $2<\kappa<r^-<{2_s^*}^-$ and $\kappa<\theta$, consider
	\begin{align*}
		&\bar{I}(u_i)-\frac{1}{\kappa}\langle u_i,u_i \rangle=
		 \left(\frac{1}{2}-\frac{1}{\kappa}\right)\|u_i\|^2+\frac{\lambda}{\kappa}\int_{\Omega}\phi(x,u_i)u_idx\nonumber\\
		&-\lambda\int_{\Omega}\Phi(x,u_i)dx-\frac{1}{2}\int_{\Omega}\left(\int_{\Omega}\frac{F(y,u_i(y))}{|x-y|^{\mu(x,y)}}dy\right)F(x,u_i)dx \nonumber\\
		 &+\frac{1}{\kappa}\int_{\Omega}\left(\int_{\Omega}\frac{F(y,u_i(y))}{|x-y|^{\mu(x,y)}}dy\right)f(x,u_i)u_idx-\eta\int_{\Omega}J_{\alpha}(u_i)dx+\frac{\eta}{\kappa}\langle\eta_i,u_i\rangle
		\nonumber
		\end{align*}
	\begin{align*}
		 \geq &
		 \left(\frac{1}{2}-\frac{1}{\kappa}\right)\|u_i\|^2+\lambda\left(\frac{1}{\kappa}-\frac{1}{1-\gamma^-}\right)\int_{\Omega}\psi(x,u_i)dx\nonumber\\
		 &+\left(\frac{\theta}{2\kappa}-\frac{1}{2}\right)\int_{\Omega}\left(\int_{\Omega}\frac{F(y,u_i(y))}{|x-y|^{\mu(x,y)}}dy\right)F(x,u_i)dx\nonumber\\
		 &+\eta\left(\frac{1}{\kappa}-\frac{1}{{r}^-}\right)\int_{\Omega\cap\{u_i>\alpha\}}|u_i|^{r(x)}dx+\frac{\eta}{\kappa}\langle\eta_i,u_i\rangle+\frac{\eta}{r^+}\alpha^{\mathcal{V}}|\{u_i=\alpha\}|\\
		\geq&
		 \left(\frac{1}{2}-\frac{1}{\kappa}\right)\|u_i\|^2-\frac{\lambda}{1-\gamma^+}C(\Omega,N)\|u_i\|^{1-\delta}\nonumber\\&+\eta\left(\frac{1}{\kappa}-\frac{1}{{r}^-}\right)\int_{\Omega\cap\{u_i>\alpha\}}|u_i|^{r(x)}dx.\nonumber
	\end{align*}
	Here,
	\begin{align*}
		\psi(x,u_i)=&\begin{cases}
			|u_i|^{1-\gamma(x)}, & ~\text{if}~u_i>\underline{u}_{\lambda}\\
			u_i\underline{u}_{\lambda}^{-\gamma(x)}, &~\text{if}~u_i\leq\underline{u}_{\lambda}
		\end{cases}
	\end{align*}
	and
	\begin{equation}\label{R}\mathcal{V}=\begin{cases}
			{r}^-, & \|w\|_{r(\cdot)} < 1\\
			{r}^+, & \|w\|_{r(\cdot)} > 1.
		\end{cases}
	\end{equation}
Define
	\begin{equation}\label{delta}\delta=\begin{cases}
			\gamma^{+}, & \|w^{1-\gamma(x)}\|_{\frac{1}{1-\gamma(x)}} < 1\\
			\gamma^{-}, & \|w^{1-\gamma(x)}\|_{\frac{1}{1-\gamma(x)}}  > 1.
	\end{cases}\end{equation}

	Furthermore,
	\begin{align}\label{eq3}
		\bar{I}(u_i)-\frac{1}{\kappa}\langle v_i,u_i\rangle&\leq |\bar{I}(u_i)|+\frac{1}{\kappa}\|v_i\|_{Z'}\|u_i\|
		\leq c+1+\|u_i\|+o_i(1).
	\end{align}
	Finally, we have
	\begin{align}\label{eq4}
		\begin{split}
		c&+1+\|u_i\|+o_i(1)\geq \bar{I}(u_i)-\frac{1}{\kappa}\langle v_i,u_i\rangle\\
		&\geq \left(\frac{1}{2}-\frac{1}{\kappa}\right)\|u_i\|^2-\frac{\lambda}{1-\gamma^+}C(\Omega,N)\|u_i\|^{1-\delta}\\
		&+\eta\left(\frac{1}{\kappa}-\frac{1}{{r}^-}\right)\int_{\Omega}H(u_i-\alpha)|u_i|^{r(x)}dx.
		\end{split}
	\end{align}
	This implies that the boundedness of the sequence $\{u_i\} $ in $Z$. Thus up to a subsequence, still denoted by the notation of the sequence, the following holds
	\begin{align}\label{eq5}
u_i\rightharpoonup u~\text{in}~Z,~u_i(x)\rightarrow u(x)~\text{a.e. in}~\Omega,\eta_i\stackrel{\ast}{\rightharpoonup} D~\text{in}~L^{r(x)'}(\Omega)~ \text{as}~i\rightarrow\infty,
	\end{align}
	\begin{eqnarray}\label{eq5'}
		& &\text{and}~\|u_i-u\|^2\rightarrow L~ \text{as}~i\rightarrow\infty
	\end{eqnarray}
	where $r(x)'=\frac{r(x)}{r(x)-1}$ is the conjugate of $r(x)$. If $L=0$, then  $u_i\rightarrow u$ and we have nothing to prove. Therefore, let $L>0$. From \eqref{eq5} we have
	\begin{eqnarray*}
		\int_{\Omega}|u_i|^{r(x)-1}\chi_{\{u_i>\alpha-\frac{1}{n}\}}\varphi dx\rightarrow\int_{\Omega}|u|^{r(x)-1}\chi_{\{u>\alpha\}}\varphi dx~\text{for all}~\varphi\in L^{r(x)}(\Omega).\nonumber
	\end{eqnarray*}
	Here, $\chi$ denotes the {\it indicator} function. Invoking the Brezis-Lieb Lemma \cite{9}, we have
	\begin{align}\label{eq7}
		\|u_i\|^2&=\|u_i-u\|^2+\|u\|^2+o_i(1),
	\end{align}
	\begin{align}\label{eq7'}
		\int_{\Omega}|u_i\chi_{\{u_i>\alpha-\frac{1}{n}\}}|^{r(x)} dx=&\int_{\Omega}|u_i\chi_{\{u_i>\alpha-\frac{1}{n}\}}-u\chi_{\{u>\alpha\}}|^{r(x)} dx\nonumber\\
		&+\int_{\Omega}|u\chi_{\{u>\alpha\}}|^{r(x)}dx+o_i(1).
	\end{align}
	The embedding result $W^{s(\cdot),2}(\Omega)\hookrightarrow L^{q(\cdot)}(\Omega),$ proved in \cite[Theorem $3.1$]{JMAA}, guarantees that $u_i\rightarrow u$ in $L^{q(x)}(\Omega),$ where $N>2s(x,y)$ for $(x,y)\in\bar{\Omega}\times\bar{\Omega}$ and there exists $\varepsilon=\varepsilon(x)>0$ such that
	\begin{equation}\label{A2}
		\underset{y\in\Omega_{x,\epsilon}}{\sup}\{q(y)\}\leq\frac{2N}{N-2\underset{(y,z)\in\Omega_{x,\varepsilon}\times\Omega_{x,\epsilon}}{\inf} \{s(y,z)\}},
	\end{equation}
	where $\Omega_{z,\varepsilon}=B_{\varepsilon}(z)\bigcap\Omega,$ for  $z\in\Omega$.
	Also, the following always holds

	  $$\int_{\Omega}\phi(x,u_i)dx\leq\int_{\Omega}{u}_{i}^{-\gamma(x)}dx.
	  \ \
$$ 
	 	  Thus it follows that
	\begin{align}\label{eq8}
		\left|\int_{\Omega}\phi(x,u_i)(u_i-u)dx\right|&\leq o_i(1).
	\end{align}
	
	Consider now
	\begin{align*}
			o_i(1)=&\langle u_i,u_i-u\rangle
			=\\&a\iint_{Q}\frac{(u_i(x)-u_i(y))}{|x-y|^{N+2s(x,y)}}((u_i-u)(x)-(u_i-u)(y))dxdy\\
			&+b\int_{\Omega}\nabla u_i\cdot\nabla(u_i-u)dx-\lambda\int_{\Omega}\phi(x,u_i)(u_i-u) dx\\
			&-\int_{\Omega}\left(\int_{\Omega}\frac{F(y,u_i(y))}{|x-y|^{\mu(x,y)}}dy\right)f(x,u_i)(u_i-u) dx-\langle\eta_i,u_i-u\rangle\\
			=&\|u_i\|^2-\iint_{Q}\frac{(u_i(x)-u_i(y))}{|x-y|^{N+2s(x,y)}}(u(x)-u(y))dxdy\\
			&-\lambda\int_{\Omega}\phi(x,u_i)(u_i-u)dx\\
			&-\int_{\Omega}\left(\int_{\Omega}\frac{F(y,u_i(y))}{|x-y|^{\mu(x,y)}}dy\right)f(x,u_i)(u_i-u) dx-\langle\eta_i,u_i-u\rangle
			\end{align*}
		\begin{align*}
			=&\|u_i-u\|^2-\int_{\Omega}|u_i|^{r(x)-1}(u_i-u)\chi_{\{u_i>\alpha-\frac{1}{n}\}}dx+o_i(1)\\
			&(\text{invoking}~\eqref{eq5}, \eqref{eq7})\\
			=&\|u_i-u\|^2-\int_{\Omega}|u_i|^{r(x)}\chi_{\{u_i>\alpha-\frac{1}{n}\}}dx+\int_{\Omega}|u|^{r(x)}\chi_{\{u>\alpha\}}dx\\
			&+o_i(1)\\
			=&\|u_i-u\|^2-\int_{\Omega}|u_i\chi_{\{u_i>\alpha-\frac{1}{n}\}}-u\chi_{\{u>\alpha\}}|^{r(x)} dx+o_i(1)\\
			\geq&\|u_i-u\|^2-\|u_i\chi_{\{u_i>\alpha-\frac{1}{n}\}}-u\chi_{\{u>\alpha\}}\|_{r(\cdot)}^{\mathcal{V}} +o_i(1).
	\end{align*}
	Therefore

	\begin{eqnarray}\label{eq11}
		S_1&\leq \frac{\|u_i-u\|^2}{\|u_i\chi_{\{u_i>\alpha-\frac{1}{n}\}}-u\chi_{\{u>\alpha\}}\|_{r(\cdot)}^2}\leq L^{\frac{\mathcal{V}-2}{\mathcal{V}}}+o_i(1).
	\end{eqnarray}
	Thus $L>{S_1}^{\frac{\mathcal{V}}{\mathcal{V}-2}}+o_i(1)$. It now follows from \eqref{eq4}  that
	\begin{align*}
		c\geq &\left(\frac{1}{2}-\frac{1}{\kappa}\right)\|u_i\|^2-\frac{\lambda}{1-\gamma^+}C(\Omega,N)\|u_i\|^{1-\delta}\\
		\geq &\left(\frac{1}{2}-\frac{1}{\kappa}\right)(L+\|u\|^2)-\frac{\lambda}{1-\gamma^+}C(\Omega,N)\|u_i\|^{1-\delta}\nonumber\\
		\geq&\left(\frac{1}{2}-\frac{1}{\kappa}\right)(L+\|u\|^2)-\left(\frac{1}{2}-\frac{1}{\kappa}\right)\|u\|^2\nonumber\\
		 &-\left(\frac{1}{2}-\frac{1}{\kappa}\right)^{-\frac{1-\delta}{1+\delta}}\left(\frac{\lambda}{1-\gamma^+}C(\Omega,N)\right)^{\frac{2}{1+\delta}}\nonumber\\
		\geq & \left(\frac{1}{2}-\frac{1}{\kappa}\right){S_1}^{\frac{\mathcal{V}}{\mathcal{V}-2}}-\left(\frac{1}{2}-\frac{1}{\kappa}\right)^{-\frac{1-\delta}{1+\delta}}\left(\frac{\lambda}{1-\gamma^+}C(\Omega,N)\right)^{\frac{2}{1+\delta}}+o_i(1)\nonumber\\
		=&c^*+o_i(1)\nonumber
	\end{align*}
	with $c^*>0$ for a small range of $\lambda$, say $\lambda\in(0,\lambda')$. This leads to a contradiction to the assumption that $c<c^*$. Therefore $L=0$ and hence $u_i\rightarrow u$ in $Z$ as $i\rightarrow\infty$.
\end{proof}
  The next lemma will guarantee the verification of the hypotheses of the Mountain Pass Theorem for nondifferentiable functionals.
\begin{lemma}\label{hypo_of_MP}
	Let $(H_1)-(H_2)$ and $(F_1)-(F_2)$ hold. Then there exists $r_1>0$, $\lambda_0=\lambda_0(r_1)$, $r_2>0$,
	$m_0>0$, $\sigma\in Z$ such that for every $\alpha>0$ and $\lambda\in(0,\lambda_0)$ the following holds
	\begin{enumerate}
		\item $\underset{m\in[0,m_0]}{\sup}\bar{I}(m\sigma)<c^*$;
		\item $\bar{I}(v)\geq r_2$ for every $v\in \partial B_{r_1}(0)\cap Z$, where $r_1,r_2$ are independent of $\alpha$,and  $\partial B_{r_1}(0)=\{v\in Z:\|v\|=r_1\}$;
		\item $\bar{I}(m_0\sigma)<0$ for $\|m_0\sigma\|>r_1$.
	\end{enumerate}
\end{lemma}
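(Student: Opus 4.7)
The plan is to verify the three items in turn, leveraging the size estimates already developed in the proof of Theorem \ref{PS_THM}. I expect item (1), the upper bound on the supremum of $\bar{I}$ along the ray $[0,m_0]\sigma$, to be the principal technical step.

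For item (2), I bound $\bar{I}$ below on the sphere $\partial B_{r_1}(0)\cap Z$. Reproducing the estimates from the proof of Theorem \ref{PS_THM}, the cut-off singular contribution obeys $\lambda\int_\Omega \Phi(x,v)\,dx\leq \lambda C_1 \|v\|^{1-\delta}$ with $\delta\in\{\gamma^-,\gamma^+\}$, via the pointwise bound on $\phi$ together with variable-exponent H\"older; the Hardy--Littlewood--Sobolev inequality with variable exponents of \cite{alv1} combined with (F$_1$) yields
\[
\tfrac{1}{2}\int_\Omega \Bigl(F*\tfrac{1}{|x|^{\mu(\cdot)}}\Bigr)F(x,v)\,dx\leq C_2\max\{\|v\|^{2\tau^-},\|v\|^{2\tau^+}\};
\]
and the pointwise bound $J_\alpha(v)\leq |v|^{r(x)}/r(x)$ (valid uniformly in $\alpha$) together with the embedding $Z\hookrightarrow L^{r(\cdot)}(\Omega)$ controls $\eta\int_\Omega J_\alpha(v)\,dx\leq \eta C_3\max\{\|v\|^{r^-},\|v\|^{r^+}\}$. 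Since all of $2\tau^\pm,r^\pm>2$, choosing $r_1$ small absorbs these higher powers into $\tfrac{1}{4}r_1^2$; then setting $\lambda_0(r_1)\asymp r_1^{1+\delta}$ forces the remaining singular term below $\tfrac{1}{8}r_1^2$, yielding $\bar{I}(v)\geq \tfrac{1}{8}r_1^2=:r_2>0$. Both $r_1$ and $r_2$ are manifestly independent of $\alpha$.

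For item (3), I fix a nonnegative $\sigma\in C_c^\infty(\Omega)\setminus\{0\}$. By (F$_2$), $F(x,t)\geq c_0|t|^{\theta/2}$ for $|t|$ large, so as $m\to\infty$ the Choquard and discontinuous critical pieces contribute negative terms of order $-m^\theta$ and $-m^{r^-}$ respectively to $\bar{I}(m\sigma)$, while the singular piece grows only like $\lambda m^{1-\delta}$. Since $\min(\theta,r^-)>2$, we have $\bar{I}(m\sigma)\to -\infty$; fix $m_0$ large enough that $\|m_0\sigma\|>r_1$ and $\bar{I}(m_0\sigma)<0$.

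The main obstacle is item (1). I replace the arbitrary $\sigma$ above by a Brezis--Nirenberg-type concentrating family $\sigma=\sigma_\epsilon$, obtained as a smooth truncation of a near-extremal for the embedding constant $S_1$ in \eqref{sobolev_constant}, centered at a point $x_0\in\Omega$ where $r(\cdot)$ attains $r^-$ (so that the critical exponent is effectively constant in a shrinking neighbourhood of $x_0$). Because the singular, Choquard and discontinuous critical parts of $\bar{I}$ are all nonpositive, the map $m\mapsto \bar{I}(m\sigma_\epsilon)$ is continuous on $[0,m_0]$ and attains its supremum at some interior $m^*_\epsilon$. Dropping the Choquard piece for an upper bound and performing the standard asymptotic analysis of $\|\sigma_\epsilon\|$ and $\|\sigma_\epsilon\|_{r(\cdot)}$ in the variable-order fractional setting gives
\[
\sup_{m\in[0,m_0]}\bar{I}(m\sigma_\epsilon)\leq \Bigl(\tfrac{1}{2}-\tfrac{1}{\mathcal{V}}\Bigr)S_1^{\mathcal{V}/(\mathcal{V}-2)} - \lambda\!\int_\Omega\Phi(x,m^*_\epsilon\sigma_\epsilon)\,dx + o_\epsilon(1).
\]
The singular correction on the right is linear in $\lambda$, with a coefficient bounded away from zero as $\epsilon\to 0$ since $m^*_\epsilon\sigma_\epsilon$ concentrates at $x_0$. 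Taking the free parameter $\kappa\in(2,r^-)$ close to $r^-\leq\mathcal{V}$, then $\epsilon$ small, and finally restricting $\lambda$ to a sufficiently small interval $(0,\lambda_0)$, arranges that this linear-in-$\lambda$ gain dominates both the gap $(\tfrac{1}{\kappa}-\tfrac{1}{\mathcal{V}})S_1^{\mathcal{V}/(\mathcal{V}-2)}$ and the higher-order $\lambda^{2/(1+\delta)}$ correction appearing in $c^*$, delivering $\sup_m\bar{I}(m\sigma_\epsilon)<c^*$. The delicate Hardy--Littlewood--Sobolev and variable-order Sobolev scaling asymptotics for $\sigma_\epsilon$ constitute the principal technical work.
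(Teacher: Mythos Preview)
Your treatment of item (2) matches the paper's. Items (1) and (3), however, are handled by the paper in the opposite direction from yours: rather than taking $m_0$ large via the AR condition and then invoking a Brezis--Nirenberg concentrating family to push the mountain-pass maximum below $c^*$, the paper fixes an arbitrary $\sigma\in Z$ with $\sigma>0$, $\|\sigma\|=1$ and takes $m_0$ \emph{small}. Because the singular term is sublinear ($1-\gamma(x)<2$), for small $m_0$ one has
\[
\bar I(m_0\sigma)\le \tfrac12 m_0^2-\tfrac{\lambda}{1-\gamma^-}\int_\Omega m_0^{1-\gamma(x)}\sigma^{1-\gamma(x)}\,dx<0,
\]
which is item (3). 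Item (1) is then immediate: dropping all nonpositive terms gives $\bar I(m\sigma)\le\tfrac12 m^2\le\tfrac12 m_0^2<c^*$ for every $m\in[0,m_0]$, since $c^*>0$ for $\lambda<\lambda'$. No concentration analysis whatsoever is needed.

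Your Brezis--Nirenberg route, by contrast, has genuine gaps in this setting. The sharp asymptotics you invoke for a near-extremal family $\sigma_\epsilon$ of the variable-exponent, variable-order quotient $S_1$ in \eqref{sobolev_constant}, with a mixed local--nonlocal norm in the numerator, are not available in the literature; the Aubin--Talenti bubbles and their standard expansions do not transplant to this framework. Moreover, your claim that the singular correction $\int_\Omega\Phi(x,m^*_\epsilon\sigma_\epsilon)\,dx$ stays bounded away from zero as $\epsilon\to0$ is suspect: if $\sigma_\epsilon$ concentrates at a point, integrals of sub-critical powers of $\sigma_\epsilon$ typically vanish in the limit, so the linear-in-$\lambda$ gain you rely on may disappear precisely when you need it. Finally, you propose tuning $\kappa$ toward $r^-$, but $\kappa$ was already fixed in Theorem \ref{PS_THM} and the threshold $c^*$ depends on that choice. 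The paper's small-$m_0$ trick, which exploits the sublinear singular nonlinearity rather than the superquadratic terms, sidesteps all of this.
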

\begin{proof}
	Let $\sigma\in Z$ be fixed with $\sigma>0$ in $\Omega$ and $\|\sigma\|=1$. Now for sufficiently small $m_0>0$ and $\lambda\in(0,\lambda'')$ for sufficiently small $\lambda''$, we have
	\begin{align}\label{eq13}
		\bar{I}(m\sigma)&\leq \frac{1}{2}m_0^2-\frac{\lambda}{1-\gamma^-}\int_{\Omega}m_0^{1-\gamma(x)}\sigma^{1-\gamma(x)}dx<0
		\ \ \text{and} \ \
		\frac{1}{2}m_0^2< c^*
	\end{align}
	for every $m\in[0,m_0]$, thus proving properties $1$ and $3$. Furthermore, by the embedding result in \cite[Theorem $3.1$]{JMAA},
	\begin{align}\label{ineq1}
		\bar{I}(u)&\geq\frac{1}{2}\|u\|^2-\lambda C_1\|u\|^{1-\delta}-C_2\|u\|^{2\mathcal{A}}-\eta C_3\|u\|^{\mathcal{V}}
	\end{align}
	where
	\begin{equation}\label{A}\mathcal{A}=\begin{cases}
			\tau^-, & \|v\|_{\beta^+\tau(\cdot)} < 1\\
			\tau^+, & \|v\|_{\beta^+\tau(\cdot)}> 1.
	\end{cases}\end{equation}
	Here, we have used the Hardy-Littlewood-Sobolev inequality for variable exponent given in \cite[Proposition $2.4$]{alv1} and the conditions in $(F_1)$, to estimate the Choquard term.
	Define
	$$\tilde{m}(t)=t^{1+\delta}-2C_2t^{2\mathcal{A}-1+\delta}-2\eta C_3t^{\mathcal{V}-1+\delta}-2\lambda C_1.$$
	This function achieves a maximum, say, at $t=r_1>0$, i.e. $\underset{t\in[0,1]}{\max}\{\tilde{m}(t)\}=\tilde{m}(r_1)>0$. We let $$\lambda'''=\frac{1}{2C_1}\tilde{m}(r_1)
	\
	\hbox{ and}
	\
	\lambda_0=\min\{\lambda',\lambda'',\lambda'''\},$$ where $\lambda'$ is obtained from  Theorem \ref{PS_THM}. Then for any $u \in Z$ such that $\|u\|=r_1$ and for any $\lambda\in(0,\lambda_0)$ we have $$\bar{I}(u)\geq \frac{1}{2}r_1^{1-\delta}\tilde{m}(r_1)=r_2.$$ Thus for $\lambda\in (0,\lambda_0),$ property $2$ holds.
\end{proof}
\section{Proofs of the main results}\label{proof}
\noindent This section is devoted to the proofs of the main results stated in Theorems \ref{mainres1} and \ref{mainres2}.

\subsection{Proof of Theorem \ref{mainres1}}
Let  $m_0$, $\sigma$, $r_1$, $r_2$, $\lambda'$ be as given in Lemma $\ref{hypo_of_MP}$. We further denote $$c_\alpha=\underset{\zeta\in \Gamma}{\inf}~\underset{t\in [0,1]}{\max}~\bar{I}(\zeta(t))~\text{and}~\Gamma=\{\zeta\in C([0,1];Z):\zeta(0)=0,\zeta(1)=m_0\sigma\}.$$ Since $\bar{I}$ fulfills the conditions of the theorem (as proved in Lemma $\ref{hypo_of_MP}$), the existence of a (PS)$_{c_{\alpha}}$ sequence $\{w_i\}\subset Z$, such that $|w_i|>\underline{u}_{\lambda}$ a.e. in $\Omega$, is guaranteed that obeys $\bar{I}(w_i)=c_\alpha+o_i(1)$ and $\Lambda_{\bar{I}}(w_i)=o_i(1)$. The properties given in Lemma $\ref{hypo_of_MP}$ imply
\begin{equation}\label{starrr 7}
	r_2\leq c_\alpha< c^*=\left(\frac{1}{2}-\frac{1}{\kappa}\right){S_1}^{\frac{\mathcal{V}}{\mathcal{V}-2}}-\left(\frac{1}{2}-\frac{1}{\kappa}\right)^{-\frac{1-\delta}{1+\delta}}\left(\frac{\lambda}{1-\gamma^+}C(\Omega,N)\right)^{\frac{2}{1+\delta}}~
\end{equation}
for all $\alpha>0$. Thus, by  \cite[Theorem $3.1$]{JMAA} there exists $w_\alpha\in Z$ such that, up to a subsequence which is still denoted by the notation of the sequence, $w_i\rightarrow w_\alpha$ in $Z$ as $n\rightarrow \infty$. This establishes $\bar{I}(w_\alpha)=c_\alpha$ and $0\in \partial \bar{I}(w_\alpha)$. Further, by \eqref{3.9} and Lemma $\ref{properties I}$, we have
\begin{align*}
	&a\iint_{Q}\frac{(w_\alpha(x)-w_\alpha(y))}{|x-y|^{N+2s(x,y)}}(\varphi(x)-\varphi(y))dxdy+b\int_{\Omega}\nabla w_{\alpha}\cdot\nabla\varphi dx\\
	&=\lambda\int_\Omega \Phi(x,w_\alpha)\varphi dx
	+\int_\Omega\left(\frac{F(y,u)}{|x-y|^{\mu(x,y)}}\right)dy\int_\Omega f(x,u)\varphi dx
	+\eta\int_\Omega j_\alpha\varphi dx,\nonumber
\end{align*}
for every $\varphi\in Z$, where $f_\alpha(x)\in [\underline{f}_\alpha(w_\alpha(x)),\overline{f}_\alpha(w_\alpha(x))]$ a.e. in $\Omega$. Clearly $|w_{\alpha}|\geq \underline{u}_{\lambda}>0$ a.e. in $\Omega$. Therefore, $w_{\alpha}$ is a nontrivial weak solution of \eqref{main3}. 

We now claim is that the Lebesgue measure of the set $\{x\in\Omega:w_{\alpha}(x)>\alpha\}$ is positive. Suppose to the contrary, that $|\{x\in\Omega:w_{\alpha}(x)>\alpha\}|=0$. Then we have $w_{\alpha}\leq\alpha$ a.e. in $\Omega$. Therefore, from the weak formulation of the problem \eqref{main3} and on testing with $\varphi=w_{\alpha}$ we get
\begin{align}\label{ineq2}
	 2r_2\leq\|w_{\alpha}\|^2=&\lambda\int_{\Omega}\phi(x,w_{\alpha})w_{\alpha}dx+\int_{\Omega}\int_{\Omega}\left(\frac{F(y,w_{\alpha})}{|x-y|^{\mu(x,y)}}dy\right)f(x,w_{\alpha})w_{\alpha}dx\nonumber\\
	&+\int_{\Omega}j_{\alpha}w_{\alpha}dx\nonumber\\
	\leq & \lambda\alpha^{1-\delta}|\Omega|+\frac{C}{\tau^-}|\Omega|^{\frac{1}{\tau^+}}\alpha^{2}+\alpha^{\mathcal{V}}|\Omega|.
\end{align}
This is impossible since $r_2$ is independent of $\alpha$. Thus there exists a small $\alpha$, say $\alpha'$, such that for all $\alpha\in (0,\alpha')$ we have $|\{x\in\Omega:w_{\alpha}(x)>\alpha\}|>0$.
\qed\\

Since by Theorem \ref{mainres1} we have proved the existence of a nontrivial solution to \eqref{main prob} it will now be interesting to investigate the nature of this obtained solution under the limit $\alpha\rightarrow 0^+$.

\begin{theorem}\label{limitingcase1}
	Let $r_2$ be as in Lemma \ref{hypo_of_MP} and $c_{\alpha}, c_0$ be as in the proof of Theorem \ref{mainres1}. Then $\underset{\alpha\rightarrow 0^+}{\lim}c_{\alpha}=c_0\geq r_2$.
\end{theorem}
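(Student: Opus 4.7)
The plan is to compare the truncated energy functional $\bar{I}$ from Theorem \ref{mainres1} (which I shall write $\bar{I}^{\alpha}$ to emphasise the cut-off parameter) with the corresponding functional $\bar{I}^{0}$ associated with the limit problem $(P_0)$ from Theorem \ref{mainres2}, and to show that the two differ by a quantity that vanishes uniformly on $Z$ as $\alpha\to 0^+$. Once this is established, convergence of the minimax levels $c_{\alpha}\to c_{0}$ is immediate, since by Lemma \ref{hypo_of_MP} the path class $\Gamma=\{\zeta\in C([0,1];Z):\zeta(0)=0,\,\zeta(1)=m_0\sigma\}$ can be chosen independently of $\alpha$ (the constants $r_1,r_2,m_0,\sigma$ arising in that lemma come from estimates in which the $\alpha$-dependent term $-\eta\int_{\Omega}J_{\alpha}(u)\,dx$ is nonpositive and is simply discarded).

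The uniform comparison rests on the pointwise estimate
\[
|J_0(t)-J_{\alpha}(t)|\leq \frac{\alpha^{r(x)}}{r(x)}\qquad \text{for every } t\geq 0,
\]
which follows from a direct computation: if $t>\alpha$, then $J_0(t)-J_{\alpha}(t)=\alpha^{r(x)}/r(x)$, and if $0\leq t\leq\alpha$, then $J_{\alpha}(t)=0$ while $J_0(t)=t^{r(x)}/r(x)\leq\alpha^{r(x)}/r(x)$. Because by Remark \ref{positive_soln_reason} every competitor in the minimax procedure may be taken to be nonnegative, integration yields
\[
|\bar{I}^{\alpha}(u)-\bar{I}^{0}(u)|\leq \frac{\eta|\Omega|}{r^{-}}\,\alpha^{r^{-}}=:\epsilon(\alpha)\qquad\text{for all nonnegative } u\in Z\text{ and }\alpha\in(0,1),
\]
with $\epsilon(\alpha)\to 0^+$. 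Applying this bound to every path $\zeta\in\Gamma$ and taking the infimum gives $|c_{\alpha}-c_{0}|\leq\epsilon(\alpha)$, whence $\lim_{\alpha\to 0^+}c_{\alpha}=c_{0}$.

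It remains to prove $c_{0}\geq r_2$. The entire mountain-pass geometry of Lemma \ref{hypo_of_MP} transfers verbatim to $\bar{I}^{0}$: the lower bound $\bar{I}^{0}(v)\geq r_2$ on $\partial B_{r_1}(0)\cap Z$ rests only on $J_{0}(u)\leq|u|^{r(x)}/r(x)$ (equality for $u\geq 0$), while the upper bound $\bar{I}^{0}(m_0\sigma)<0$ and the endpoint control $\bar{I}^0(m\sigma)<c^{*}$ for small $m$ use only nonpositive terms and hence are $\alpha$-independent. Since $\|m_0\sigma\|>r_1$, every path $\zeta\in\Gamma$ must cross $\partial B_{r_1}(0)$, so $\max_{t\in[0,1]}\bar{I}^{0}(\zeta(t))\geq r_2$ for every $\zeta\in\Gamma$, giving $c_{0}\geq r_2$. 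The argument is essentially soft; the only point that requires care is verifying that $r_1,r_2,m_0,\sigma$ may be selected uniformly in $\alpha$, which is ensured by the fact that the $\alpha$-dependence in $\bar{I}$ enters only through a uniformly small perturbation controlled by $\alpha^{r^{-}}$.
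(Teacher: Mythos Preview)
Your argument is correct and follows essentially the same route as the paper: both proofs rest on the uniform pointwise bound $|J_0(t)-J_\alpha(t)|\le \alpha^{r(x)}/r(x)$, integrate it to obtain $\bar I^{\alpha}(u)=\bar I^{0}(u)+o_\alpha(1)$ uniformly in $u$, and then pass this through the common path class $\Gamma$ to get $c_\alpha\to c_0$; the lower bound $c_0\ge r_2$ is, in both cases, a consequence of the $\alpha$-independent mountain-pass geometry of Lemma~\ref{hypo_of_MP}, which you spell out more explicitly than the paper does.

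One cosmetic remark: your appeal to Remark~\ref{positive_soln_reason} to restrict to nonnegative competitors is unnecessary and slightly misdirected (that remark concerns critical points, not arbitrary paths in $\Gamma$). It does no harm, however, because your pointwise estimate in fact holds for all $t\in\mathbb{R}$: for $t\le 0$ one has $J_0(t)=J_\alpha(t)=0$ since $H(t)=H(t-\alpha)=0$, so the bound $|\bar I^{\alpha}(u)-\bar I^{0}(u)|\le \eta|\Omega|\alpha^{r^-}/r^-$ is valid for every $u\in Z$ and the detour through nonnegativity can simply be dropped.
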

\begin{proof}
	Let $\bar{I}_0:Z\rightarrow\mathbb{R}$ be the associated functional to the problem \eqref{main prob} for $\alpha=0$ which is defined as follows.
	\begin{align}\label{alphazerofunc}
		 \bar{I}_0(u)=&\frac{1}{2}\|u\|^2-\lambda\int_{\Omega}\frac{|u|^{1-\gamma(x)}}{1-\gamma(x)}dx-\int_{\Omega}\int_{\Omega}\left(\frac{F(y,u)}{|x-y|^{\mu(x,y)}}dy\right)F(x,u)dx\nonumber\\
		&-\int_{\Omega}H(u)\frac{|u|^{r(x)}}{r(x)}dx~\text{for all}~ u\in Z.
	\end{align}
	Define
	\begin{align}\label{zeroenergylevel}
		c_0&=\underset{\xi\in\Gamma}{\inf}\underset{t\in [0,1]}{\max}\bar{I}_0(\xi(t))
	\end{align}
	where $$\Gamma=\{\xi\in C([0,1];Z):\xi(0)=0,\xi(1)=m_0\sigma\}.$$ The notations $m_0, \sigma$ are as in Theorem \ref{hypo_of_MP}. Since $$J_{\alpha}(t)=\int_0^tH(t-\alpha)t^{r(x)-1}dt=\chi_{\{t>\alpha\}}\frac{1}{r(x)}(|t|^{r(x)}-\alpha^{r(x)}),$$ this implies that
	\begin{align}\label{starrr9'}
		\left|\int_{\Omega}\frac{1}{r(x)}|u|^{r(x)}dx\right.&\left.-\int_{\Omega}J_{\alpha}(u)dx\right| 
			 \nonumber\\
		&=\left|\int_{\Omega}\frac{|u|^{r(x)}}{r(x)}\chi_{\{u\leq\alpha\}}dx
		+\int_{\Omega}\frac{\alpha^{r(x)}}{r(x)}\chi_{\{u>\alpha\}}dx\right|
		\leq \frac{2\alpha^{\mathcal{V}}}{{r}^-}.
	\end{align}
	Clearly, $\bar{I}_0(u)\leq\bar{I}(u)$ for all $u\in Z$. Thus $c_0\leq c_{\alpha}$ for any $\alpha>0$. From \eqref{starrr9'} we establish that
	\begin{align}\label{starrr11'}
		\bar{I}(u)&=\bar{I}_0(u)+o_{\alpha}(1)~\text{for all}~u\in Z.
	\end{align}
	This further gives
	\begin{align}\label{starrr12'}
		\bar{I}(\xi(t))&=\bar{I}_0(\xi(t))+o_{\alpha}(1)~\text{for all}~\xi\in\Gamma,~t\in[0,1]
	\end{align}
	and thus $c_{\alpha}=c_0+o_{\alpha}(1)$. So from \eqref{starrr12'} and using the properties proved in Lemma \ref{hypo_of_MP} we conclude from the Mountain Pass Theorem that $\underset{\alpha\rightarrow 0^+}{\lim}c_{\alpha}=c_0$.
\end{proof}

\subsection{Proof of Theorem \ref{mainres2}}
Let $w_{\alpha}\in Z$ be a weak solution of \eqref{main prob}. So,
\begin{align}\label{alphaform}
	a\iint_{Q}&\frac{(w_{\alpha}(x)-w_{\alpha}(y))}{|x-y|^{N+2s(x,y)}}(\varphi(x)-\varphi(y))dxdy+b\int_{\Omega}\nabla w_{\alpha}\cdot\nabla\varphi dx\\
	=&\lambda\int_{\Omega}|w_{\alpha}|^{-\gamma(x)-1}w_{\alpha}\varphi dx
	+\int_{\Omega}\int_{\Omega}\left(\frac{F(y,w_{\alpha})}{|x-y|^{\mu(x,y)}}dy\right)f(x,w_{\alpha})\varphi dx
	\nonumber\\
	&+\eta\int_{\Omega}j_{\alpha}\varphi dx
\end{align}	
for all $\varphi\in Z$. Consider a $(PS)_c$ sequence, say $(w_i)$, such that $w_i\rightarrow w_{\alpha}$ for a subsequence in $Z$ as $i\rightarrow\infty$. Therefore the following holds.
\begin{align*}
	c_{\alpha}+1+\|w_i\|+o(1)&\geq \left(\frac{1}{2}-\frac{1}{\kappa}\right)\|w_{\alpha}\|^2-\lambda\left(\frac{1}{1-\gamma^+}-\frac{1}{\kappa}\right)C(\Omega,N)\|w_i\|^{1-\delta}.
\end{align*}
Thus,
\begin{align*}
	c_{\alpha}+1+\|w_{\alpha}\|&\geq \left(\frac{1}{2}-\frac{1}{\kappa}\right)\|w_{\alpha}\|^2-\lambda\left(\frac{1}{1-\gamma^+}-\frac{1}{\kappa}\right)C(\Omega,N)\|w_{\alpha}\|^{1-\delta}
\end{align*}
for all $\alpha>0$. This implies that $w_{\alpha}$ is uniformly bounded in $Z$. 

Let $\{\alpha_i\}$ be a sequence such that $\alpha_i\rightarrow 0^+$. Therefore, there exists up to a subsequence, $w_0$ such that $w_{\alpha_i}\rightharpoonup w_0~\text{in}~Z.$ From the embedding result in \cite[Theorem $3.1$]{JMAA} we have $w_{\alpha_i}\rightarrow w_{0}$ in $L^{q(x)}(\Omega).$
Hence, $w_{\alpha_i}(x)\rightarrow w_{0}(x)$ a.e. in $\Omega$. Also, $j_{\alpha_i}(w_{\alpha_i})\rightarrow |w_0|^{r(x)-1}w_0$. Thus
\begin{align*}
	&\iint_{Q}\frac{(w_0(x)-w_0(y))}{|x-y|^{N+2s(x,y)}}(\varphi(x)-\varphi(y))dxdy=
	\lambda\int_{\Omega}|w_0|^{-\gamma(x)-1}w_0\varphi dx\nonumber\\
	&+\int_{\Omega}\int_{\Omega}\left(\frac{F(y,w_0)}{|x-y|^{\mu(x,y)}}dy\right)f(x,w_0)\varphi dx
	+\eta\int_{\Omega}|w_0|^{r(x)-2}w_0\varphi dx~\text{for all}~\varphi\in Z.
\end{align*}
In the above equation, Remark \ref{remark limit} in Section~\ref{appendix} has been used to pass the limit in the singular term. Furthermore, using \eqref{starrr 7}, we obtain
\begin{align}\label{ineq6}
	r_2&\leq \bar{I}(w_{\alpha_i})=c_{\alpha_i}\nonumber\\
	 <c^*=&\left(\frac{1}{2}-\frac{1}{\kappa}\right){S_1}^{\frac{\mathcal{V}}{\mathcal{V}-2}}-\left(\frac{1}{2}-\frac{1}{\kappa}\right)^{-\frac{1-\delta}{1+\delta}}\left(\frac{\lambda}{1-\gamma^+}C(\Omega,N)\right)^{\frac{2}{1+\delta}}
\end{align}
for all $n\in\mathbb{N}$. From the inequality \eqref{starrr9'} we have
\begin{align}\label{eq14}
	c_{\alpha_i}=\bar{I}(u_{\alpha_i})=\bar{I}_0(u_{\alpha_i})+o_{\alpha_i}(1).
\end{align}
Thus by \eqref{starrr11'}-\eqref{ineq6} we see that
\begin{align}\label{starrr10}
	w_{\alpha_i}\rightarrow w_0,~\text{in}~Z~\text{as}~\alpha_i\rightarrow 0^+.
\end{align}
So $c_0=\bar{I}_0(w_0)\geq r_2>0$. This completes the proof.
\qed
\section{Appendix}\label{appendix}
 In this section we shall prove the existence of a solution to the {\it singular} problem.
\begin{theorem}\label{imp_thm}
	For any $\lambda\in (0,\lambda_0)$ there exists a positive weak solution $\underline{u}_{\lambda}$ to \begin{align}\label{sing_prob}
		\begin{split}
			a(-\Delta)^{s(\cdot)}u+b(-\Delta)u&=\lambda |u|^{-\gamma(x)-1}u,~\text{in}~\Omega\\
			u=0,~\text{in}~\mathbb{R}^N\setminus\Omega.
		\end{split}
	\end{align} such that $\underline{u}_{\lambda}<u$ a.e. in $\Omega,$ where $u$ is a solution to \eqref{main prob}.
\end{theorem}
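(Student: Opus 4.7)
\medskip

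The plan is to realize $\underline{u}_{\lambda}$ as a non-trivial non-negative global minimizer of the singular energy functional
\[
J_{\lambda}(u) := \tfrac12\|u\|^{2} - \lambda\int_{\Omega}\frac{(u^{+})^{1-\gamma(x)}}{1-\gamma(x)}\,dx
\]
on the reflexive Hilbert space $Z$, then to recover the Euler--Lagrange equation \eqref{sing_prob} by a one-sided variational test, and finally to compare $\underline{u}_{\lambda}$ with any solution $u$ of \eqref{main prob} via a monotonicity argument. Coercivity of $J_{\lambda}$ on $Z$ follows from the variable-exponent H\"older inequality combined with the embedding $Z\hookrightarrow L^{2}(\Omega)$, which yields $\int_{\Omega}(u^{+})^{1-\gamma(x)}/(1-\gamma(x))\,dx\le \frac{C(\Omega,N)}{1-\gamma^{+}}\|u\|^{1-\delta}$ with $1-\delta<2$ (where $\delta$ is as in \eqref{delta}); moreover the compact embedding $Z\hookrightarrow L^{q(x)}(\Omega)$ from \cite[Theorem~3.1]{JMAA}, joined with a.e.\ convergence and dominated convergence, renders the singular primitive sequentially weakly continuous, so $J_{\lambda}$ is weakly lower semicontinuous and attains its infimum. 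Evaluating along a ray $m\sigma$ with fixed $\sigma\in Z\cap L^{\infty}(\Omega)$, $\sigma>0$, $\|\sigma\|=1$, gives $J_{\lambda}(m\sigma)\le \tfrac{m^{2}}{2}-\tfrac{\lambda m^{1-\gamma^{+}}}{1-\gamma^{+}}\int_{\Omega}\sigma^{1-\gamma(x)}\,dx<0$ for every small $m>0$ because $1-\gamma^{+}<2$; hence the minimizer $\underline{u}_{\lambda}$ is non-trivial, and replacing it by $|\underline{u}_{\lambda}|$ does not raise $J_{\lambda}$ (cf.\ Remark~\ref{positivity}), so we may assume $\underline{u}_{\lambda}\ge 0$.

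Since $J_{\lambda}$ is not G\^ateaux-differentiable where $\underline{u}_{\lambda}$ vanishes, the weak formulation of \eqref{sing_prob} is recovered by the standard singular one-sided test. For any non-negative $\varphi\in Z$ and any $\tau>0$, minimality gives $J_{\lambda}(\underline{u}_{\lambda}+\tau\varphi)\ge J_{\lambda}(\underline{u}_{\lambda})$; dividing by $\tau$ and passing to the limit by Fatou's lemma yields the variational inequality
\[
a\iint_{Q}\frac{(\underline{u}_{\lambda}(x)-\underline{u}_{\lambda}(y))(\varphi(x)-\varphi(y))}{|x-y|^{N+2s(x,y)}}\,dx\,dy+b\int_{\Omega}\nabla\underline{u}_{\lambda}\cdot\nabla\varphi\,dx\ge \lambda\int_{\Omega}\underline{u}_{\lambda}^{-\gamma(x)}\varphi\,dx,
\]
and the reverse inequality is obtained by testing with $(\underline{u}_{\lambda}-\tau\varphi)^{+}$ and a density argument. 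The a.e.\ positivity $\underline{u}_{\lambda}>0$, needed both to make the right-hand integral meaningful and to promote the inequality to an equality, is established by inserting a sub-solution $c\,e_{1}^{\theta}$ beneath $\underline{u}_{\lambda}$, where $e_{1}>0$ is the first eigenfunction of the mixed operator $a(-\Delta)^{s(\cdot)}+b(-\Delta)$ on $\Omega$ and $c,\theta>0$ are adjusted so that $ce_{1}^{\theta}$ weakly satisfies $a(-\Delta)^{s(\cdot)}v+b(-\Delta)v\le \lambda v^{-\gamma(x)}$.

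For the comparison, let $u$ solve \eqref{main prob} and test the difference of the two weak formulations with $\varphi=(\underline{u}_{\lambda}-u)^{+}\in Z$. Writing $\mathcal{Q}$ for the bilinear form of $a(-\Delta)^{s(\cdot)}+b(-\Delta)$ and $R(x,u)\ge 0$ for the Choquard and Heaviside contributions present only in \eqref{main prob}, one finds
\begin{align*}
\mathcal{Q}(\underline{u}_{\lambda}-u,\varphi)=\lambda\int_{\Omega}\bigl(\underline{u}_{\lambda}^{-\gamma(x)}-u^{-\gamma(x)}\bigr)\varphi\,dx-\int_{\Omega}R(x,u)\,\varphi\,dx.
\end{align*}
The pointwise inequalities $(w(x)-w(y))(w^{+}(x)-w^{+}(y))\ge (w^{+}(x)-w^{+}(y))^{2}$ and $\nabla w\cdot\nabla w^{+}=|\nabla w^{+}|^{2}$ give $\mathcal{Q}(\underline{u}_{\lambda}-u,\varphi)\ge \|\varphi\|^{2}$; on $\{\underline{u}_{\lambda}>u\}$ the map $t\mapsto-t^{-\gamma(x)}$ is non-increasing, so the first term on the right is $\le 0$, and the second is $\le 0$ because $R\ge 0$. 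Hence $\varphi\equiv 0$, i.e.\ $\underline{u}_{\lambda}\le u$ a.e., and the strict inequality follows from a strong maximum principle applied to the non-negative function $u-\underline{u}_{\lambda}$, which solves a mixed-order equation with strictly positive right-hand side on $\{u>0\}$.

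The hardest step is the Euler--Lagrange derivation: testing against a general $\varphi\in Z$ requires $\int_{\Omega}\underline{u}_{\lambda}^{-\gamma(x)}\varphi\,dx<\infty$, which for the variable-order mixed operator forces either a variable-exponent Hardy-type inequality or a boundary estimate of the form $\underline{u}_{\lambda}\gtrsim \operatorname{dist}(\cdot,\partial\Omega)^{\bar s(x)\wedge 1}$, neither of which is standard in the presence of both a variable order $s(\cdot)$ and the local Laplacian.
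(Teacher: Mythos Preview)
Your approach is essentially the one the paper adopts: the paper defines the singular energy $\bar{I}_{\mathrm{sing}}$ on $Z$ and then simply cites \cite[Lemma~A.2]{chou-rep} for the direct-minimization argument, so your sketch (coercivity, weak lower semicontinuity, negative energy along a ray, one-sided variational test, comparison) is exactly the content that the reference supplies. In that sense there is nothing to contrast---you have written out what the paper leaves to a citation, and you have correctly flagged the genuine technical bottleneck (integrability of $\underline{u}_{\lambda}^{-\gamma(x)}\varphi$ and a lower barrier for $\underline{u}_{\lambda}$) that the paper also does not address explicitly.

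One structural remark is worth making. In the paper's logical flow, the comparison $\underline{u}_{\lambda}<u$ is really needed for $u$ a solution of the \emph{cut-off} problem \eqref{main3}, not of \eqref{main prob} directly (see Remark~\ref{key_obs}): one first shows that any mountain-pass solution $u$ of \eqref{main3} dominates $\underline{u}_{\lambda}$, which then forces $\phi(x,u)=u^{-\gamma(x)}$ and hence that $u$ solves \eqref{main prob}. Running your comparison argument against \eqref{main3} is in fact cleaner, because the right-hand side $\phi(x,u)$ is globally controlled by $\underline{u}_{\lambda}^{-\gamma(x)}$, so no a~priori positivity of $u$ is required to make the test function legitimate; on $\{\underline{u}_{\lambda}>u\}$ one has $\phi(x,u)=\underline{u}_{\lambda}^{-\gamma(x)}$, and your monotonicity step goes through verbatim. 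As stated (against \eqref{main prob}) your argument is still correct, but it tacitly uses that $u>0$ a.e.\ to give meaning to $u^{-\gamma(x)}$, which is itself a consequence of what you are proving.
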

\begin{proof}
	We first prove the existence of $\underline{u}_{\lambda}$. We observe that the functional $\bar{I}_{\text{sing}}$  associated to \eqref{sing_prob} can be defined as follows:
	\begin{align}
		\label{ef_auxZ}
		\begin{split}
			\bar{I}_{\text{sing}}(u)=&\frac{a}{2}\iint_{Q}\frac{|u(x)-u(y)|^2}{|x-y|^{N+2s(x,y)}}dxdy+\frac{b}{2}\int_{\Omega}|\nabla u|^2dx-\lambda\int_{\Omega}|u|^{1-\gamma(x)}dx
		\end{split}
	\end{align}
	for $u\in Z$. The rest of the proof follows the argument in the proof of   {\ Choudhuri-Repov\v{s}} \cite[Lemma A.2]{chou-rep}.
\end{proof}

\begin{remark}\label{remark limit}
	We claim that
	\begin{equation}\label{singular limit}
		\lim\limits_{\alpha_i\rightarrow 0^+}\int_\Omega\frac{\varphi}{v_{\alpha_i}^{\gamma(x)}}=\int_\Omega\frac{\varphi}{(v_{0}^+)^{\gamma(x)}}<\infty~\text{for all}~\varphi\in Z,
	\end{equation}
	where $v_{\alpha_i}$ is a positive weak solution to $\eqref{main prob}$ with $\alpha=\alpha_i$ as obtained in Theorem $\ref{mainres1}$ and $u_0$ is the uniform limit of the sequence $(u_{\alpha_i})$, up to a subsequence, in $\Omega$ obtained by \cite[Theorem $3.1$]{JMAA}.
	\begin{proof}[Proof of the claim]
		We first denote $$S_{\alpha_i}=\{x\in\Omega:v_{\alpha_i}(x)=0\}.$$ As $v_{\alpha_i}^{-\gamma(x)} \varphi\in L^1(\Omega)$ for every $\varphi\in Z$, the Lebesgue measure of $S_{\alpha_i}$ (denoted in short as $|S_{\alpha_i}|$) is zero, i.e. $|S_{\alpha_i}|=0$. Due to the sub-additive property of the Lebesgue measure implies, we have that $|\bigcup S_{\alpha_i}|=0$. 
		
		Let $|D|<\epsilon$ such that for $x\in\Omega\setminus D$, $v_0(x)=0$. We can construct the set $D$ by the Egorov Theorem. Therefore if we are able prove that 
		$$|T_{\alpha_i}|=|\{x\in\Omega\setminus D: v_{\alpha_i}(x)\rightarrow 0~\text{as}~{\alpha_i}\rightarrow0^+\}|=0,$$ then $\eqref{singular limit}$ gets proven. We define $$T_{\delta,{\alpha_i}}=\left\{x\in\Omega\setminus D:|v_{\alpha_i}(x)|<\delta\right\}.$$
		Apparently, for a fixed ${\alpha_i}$, by the uniform convergence, $|T_{\delta,{\alpha_i}}|\rightarrow 0$ as $\delta\rightarrow 0^+$. Furthermore, we have
		$$\underset{\delta,{\alpha_i}\in\mathbb{R}^+}{\bigcup}T_{\delta,{\alpha_i}}=\underset{\delta}{\bigcup}\underset{\delta\leq {\alpha_i}}{\bigcap}T_{\delta,{\alpha_i}}.$$
		Thus for a fixed ${\alpha_i}$, $$\left|\underset{\delta\leq {\alpha_i}}{\bigcap}T_{\delta,{\alpha_i}}\right|=\underset{\delta\rightarrow0^+}{\lim}T_{\delta,{\alpha_i}}=0.$$
		Hence, $$\left|\underset{\delta,{\alpha_i}\in\mathbb{R}^+}{\bigcup}T_{\delta,{\alpha_i}}\right|=0$$
		and so $|T_{\alpha_i}|=0$.
	\end{proof}
\end{remark}
\section*{Conclusions}
 We have analyzed the elliptic problem driven by a singularity, a Choquard term and a discontinuous power nonlinerity and have proved the existence of a solution. In the process we have derived an embedding result and have proved the existence of a positive global minimizer of the associated {\it singular problem}. We have also proved that as $\alpha\rightarrow 0^+$, the corresponding limiting solutions also converge to a solution of the above problem when $\alpha=0$. We also plan to extend the problem with a Kirchhoff type operator.
\section*{Acknowledgements}
We thank the referees for comments and suggestions.
\small{\section*{Declarations}
\subsection*{Funding}
Repov\v{s} was supported by the Slovenian Research Agency grants P1-0292, J1-4031, J1-4001, N1-0278, N1-0114, and N1-0083.
\subsection*{Conflicts of interest/Competing interests}
The authors declare to have no conflict of interest/competing interest.
\subsection*{Authors' contributions}
The authors contributed equally to this research.}

\bigskip 
\small 
\noindent
\end{document}